\chardef\coloryes=0 
\chardef\isitdraft=0 
\def\eqref#1{({\ref{#1}})}                
\def\startnewsection#1#2{\section{#1}\label{#2}\setcounter{equation}{0}}   
\def\nnewpage{} 
\def\startnewsection#1#2{\section{#1}\label{#2}\setcounter{equation}{0}}   
\def\nnewpage{} 
\begin{document}
\nocite{*}

\def\ques{{\colr \underline{??????}\colb}}
\def\nto#1{{\colC \footnote{\em \colC #1}}}
\def\fractext#1#2{{#1}/{#2}}
\def\fracsm#1#2{{\textstyle{\frac{#1}{#2}}}}   
\def\nnonumber{}
\def\les{\lesssim}
\def\plusdelta{+\delta}

\def\colr{{}}
\def\colg{{}}
\def\colb{{}}
\def\colu{{}}
\def\cole{{}}
\def\colA{{}}
\def\colB{{}}
\def\colC{{}}
\def\colD{{}}
\def\colE{{}}
\def\colF{{}}


\ifnum\coloryes=1

\definecolor{coloraaaa}{rgb}{0.1,0.2,0.8}
\definecolor{colorbbbb}{rgb}{0.1,0.7,0.1}
\definecolor{colorcccc}{rgb}{0.8,0.3,0.9}
\definecolor{colordddd}{rgb}{0.0,.5,0.0}
\definecolor{coloreeee}{rgb}{0.8,0.3,0.9}
\definecolor{colorffff}{rgb}{0.8,0.3,0.9}
\definecolor{colorgggg}{rgb}{0.5,0.0,0.4}

\def\colb{\color{black}}
\def\colr{\color{red}}
\def\cole{\color{colorgggg}}
\def\colu{\color{blue}}
\def\colg{\color{colordddd}}

\def\colA{\color{coloraaaa}}
\def\colB{\color{colorbbbb}}
\def\colC{\color{colorcccc}}
\def\colD{\color{colordddd}}
\def\colE{\color{coloreeee}}
\def\colF{\color{colorffff}}
\def\colG{\color{colorgggg}}

\fi
\ifnum\isitdraft=1
\chardef\coloryes=1 
\baselineskip=17pt
\pagestyle{myheadings}
\reversemarginpar
\def\const{\mathop{\rm const}\nolimits}  
\def\diam{\mathop{\rm diam}\nolimits}    
\def\rref#1{{\ref{#1}{\rm \tiny \fbox{\tiny #1}}}}
\def\theequation{\fbox{\bf \thesection.\arabic{equation}}}
\def\plusdelta{+\delta}
\def\startnewsection#1#2{\newpage\colg \section{#1}\colb\label{#2}
	\setcounter{equation}{0}
	\pagestyle{fancy}
	\lhead{\colb Section~\ref{#2}, #1 }
	\cfoot{}
	\rfoot{\thepage\ of \pageref{LastPage}}

\chead{}
\rhead{\thepage}
\def\nnewpage{\newpage}
\newcounter{startcurrpage}
\newcounter{currpage}
\def\llll#1{{\rm\tiny\fbox{#1}}}
\def\blackdot{{\color{red}{\hskip-.0truecm\rule[-1mm]{4mm}{4mm}\hskip.2truecm}}\hskip-.3truecm}
\def\bluedot{{\colC {\hskip-.0truecm\rule[-1mm]{4mm}{4mm}\hskip.2truecm}}\hskip-.3truecm}
\def\purpledot{{\colA{\rule[0mm]{4mm}{4mm}}\colb}}
\def\pdot{\purpledot}
\else
\baselineskip=15pt
\def\blackdot{{\rule[-3mm]{8mm}{8mm}}}
\def\purpledot{{\rule[-3mm]{8mm}{8mm}}}
\def\pdot{}
\fi

\def\qq{{\bar q}}
\def\KK{K}
\def\ema#1{\underline{\underline{#1}}}
\def\emb#1{\dotuline{#1}}

\ifnum\isitdraft=1
\def\llabel#1{\nonumber}
\else
\def\llabel#1{\nonumber}
\fi

\def\tepsilon{\tilde\epsilon}
\def\restr{\bigm|}
\def\into{\int_{\Omega}}
\def\intu{\int_{\Gamma_1}}
\def\intl{\int_{\Gamma_0}}
\def\tpar{\tilde\partial}
\def\bpar{\,|\nabla_2|}
\def\barpar{\bar\partial}
\def\FF{F}
\def\gdot{{\color{green}{\hskip-.0truecm\rule[-1mm]{4mm}{4mm}\hskip.2truecm}}\hskip-.3truecm}
\def\bdot{{\color{blue}{\hskip-.0truecm\rule[-1mm]{4mm}{4mm}\hskip.2truecm}}\hskip-.3truecm}
\def\cydot{{\color{cyan} {\hskip-.0truecm\rule[-1mm]{4mm}{4mm}\hskip.2truecm}}\hskip-.3truecm}
\def\rdot{{\color{red} {\hskip-.0truecm\rule[-1mm]{4mm}{4mm}\hskip.2truecm}}\hskip-.3truecm}

\def\nts#1{{\color{red}\hbox{\bf ~#1~}}} 

\def\ntsr#1{\vskip.0truecm{\color{red}\hbox{\bf ~#1~}}\vskip0truecm} 

\def\ntsf#1{\footnote{\hbox{\bf ~#1~}}} 
\def\ntsf#1{\footnote{\color{blue}\hbox{\bf ~#1~}}} 
\def\bigline#1{~\\\hskip2truecm~~~~{#1}{#1}{#1}{#1}{#1}{#1}{#1}{#1}{#1}{#1}{#1}{#1}{#1}{#1}{#1}{#1}{#1}{#1}{#1}{#1}{#1}\\}
\def\biglineb{\bigline{$\downarrow\,$ $\downarrow\,$}}
\def\biglinem{\bigline{---}}
\def\biglinee{\bigline{$\uparrow\,$ $\uparrow\,$}}
\def\ceil#1{\lceil #1 \rceil}
\def\gdot{{\color{green}{\hskip-.0truecm\rule[-1mm]{4mm}{4mm}\hskip.2truecm}}\hskip-.3truecm}
\def\bluedot{{\color{blue} {\hskip-.0truecm\rule[-1mm]{4mm}{4mm}\hskip.2truecm}}\hskip-.3truecm}
\def\rdot{{\color{red} {\hskip-.0truecm\rule[-1mm]{4mm}{4mm}\hskip.2truecm}}\hskip-.3truecm}
\def\dbar{\bar{\partial}}
\newtheorem{Theorem}{Theorem}[section]
\newtheorem{Corollary}[Theorem]{Corollary}
\newtheorem{Proposition}[Theorem]{Proposition}
\newtheorem{Lemma}[Theorem]{Lemma}
\theoremstyle{remark}
\newtheorem{Remark}[Theorem]{Remark}
\newtheorem{definition}{Definition}[section]
\def\theequation{\thesection.\arabic{equation}}

\def\sqrtg{\sqrt{g}}
\def\dd{\delta}
\def\EE{{\mathcal E}}
\def\lot{{\rm l.o.t.}}                       
\def\inon#1{~~~\hbox{#1}}                
\def\endproof{\hfill$\Box$\\}
\def\square{\hfill$\Box$\\}
\def\inon#1{~~~\hbox{#1}}                
\def\comma{ {\rm ,\qquad{}} }            
\def\commaone{ {\rm ,\qquad{}} }         
\def\dist{\mathop{\rm dist}\nolimits}    
\def\sgn{\mathop{\rm sgn\,}\nolimits}    
\def\Tr{\mathop{\rm Tr}\nolimits}    
\def\dive{\mathop{\rm div}\nolimits}    
\def\grad{\mathop{\rm grad}\nolimits}    
\def\curl{\mathop{\rm curl}\nolimits}    
\def\det{\mathop{\rm det}\nolimits}    
\def\supp{\mathop{\rm supp}\nolimits}    
\def\re{\mathop{\rm {\mathbb R}e}\nolimits}    
\def\wb{\bar{\omega}}
\def\Wb{\bar{W}}
\def\indeq{\qquad{}}                     
\def\indeqtimes{\indeq\indeq\times} 
\def\period{.}                           
\def\semicolon{\,;}                      
\newcommand{\cD}{\mathcal{D}}
\newcommand{\cH}{\mathcal{H}}
\newcommand{\al}{\alpha}
\newcommand{\be}{\beta}
\newcommand{\ga}{\gamma}
\newcommand{\de}{\delta}
\newcommand{\ep}{\epsilon}
\newcommand{\si}{\sigma}
\newcommand{\Si}{\Sigma}
\newcommand{\vfi}{\varphi}
\newcommand{\om}{\omega}
\newcommand{\Om}{\Omega}
\newcommand{\cqd}{\hfill $\qed$\\ \medskip}
\newcommand{\rar}{\rightarrow}
\newcommand{\imp}{\Rightarrow}
\newcommand{\tr}{\operatorname{tr}}
\newcommand{\vol}{\operatorname{vol}}
\newcommand{\id}{\operatorname{id}}
\newcommand{\p}{\parallel}
\newcommand{\norm}[1]{\Vert#1\Vert}
\newcommand{\abs}[1]{\vert#1\vert}
\newcommand{\nnorm}[1]{\left\Vert#1\right\Vert}
\newcommand{\aabs}[1]{\left\vert#1\right\vert}
\newcommand{\footremember}[2]{%
	\footnote{#2}
	\newcounter{#1}
	\setcounter{#1}{\value{footnote}}%
}
\newcommand{\footrecall}[1]{%
	\footnotemark[\value{#1}]%
}

	\title{Norm inflation for the Boussinesq system}
	\author{%
		Zongyuan Li\footremember{Division of Applied Mathematics}{Division of Applied Mathematics, Brown University, email: zongyuan\_li@brown.edu.}%
		\and Weinan Wang\footremember{Department of Mathematics}{Department of Mathematics, University of Southern California, email: wangwein@usc.edu.}%
	}
	\date{}
	\maketitle
	
	\bigskip

	\bigskip

	\begin{abstract}
		We prove the norm inflation phenomena for the Boussinesq system on $\mathbb T^3$. For arbitrarily small initial data $(u_0,\rho_0)$ in the negative-order Besov spaces $\dot{B}^{-1}_{\infty, \infty} \times \dot{B}^{-1}_{\infty, \infty}$, the solution can become arbitrarily large in a short time. Such largeness can be detected in $\rho$ in Besov spaces of any negative order: $\dot{B}^{-s}_{\infty, \infty}$ for any $s>0$. Notice that our initial data space is scaling critical for $u$ and is scaling subcritical for $\rho$.
	\end{abstract}

	

	\section{Introduction}
	In this paper, we consider the Boussinesq system on $\mathbb T^3$
	\begin{align}
	& u_t 
	-\Delta u 
	+ u\cdot \nabla u
	+ \nabla \pi
	=
	\rho {e}_{3}
	\label{EQ01}
	\\ 
	& \rho_t - \Delta \rho+ u\cdot \nabla \rho = 0
	\label{EQ02}
	\\
	& \nabla \cdot u=0
	.
	\label{EQ03}
	\end{align} 
	Here, $u$ is the velocity satisfying the 3D Navier-Stokes equations
	\cite{DG, T} 
	driven by $\rho$, which represents the
	density or temperature of the fluid, depending on the physical
	context. Also $\pi$ denotes the pressure while $e_3 = (0, 0, 1)$ is the unit vector.
	
	In recent years, there has been extensive research on the $2D$ Boussinesq equations. Researchers have been studying the global existence of solutions and persistence of regularity since the seminal work of Chae \cite{C} and of Hou and Li \cite{HL}, who proved the global existence and uniqueness of solutions. In \cite{CW, SW}, the authors addressed the global well-posedness in Sobolev spaces $H^s$.  Kukavica and the second author of this paper addressed the global Sobolev persistence of regularity in $W^{s,q} \times W^{s,q}$ for the fractional Boussinesq system in \cite{KW1} and the long time behavior of solutions in \cite{KW2}, respectively. For other global well-posedness results on the Boussinesq equations, cf. \cite{HK1, W1, W2, WY2}.
	
The $3D$ Boussinesq system possesses a natural scaling, i.e., for $\lambda>0$, 
	 \begin{equation}
	 u_{\lambda}(x, t)
	 =
	 \lambda u(\lambda x, \lambda^{2}t),\quad \rho_{\lambda}(x, t)
	 =
	 \lambda^{3} \rho(\lambda x, \lambda^{2}t)
	 \llabel{EQ19}
	 \end{equation}
     are again solutions with initial data
     \begin{equation}
     u_{\lambda}(x, 0)
     =
     \lambda u(\lambda x),\quad \rho_{\lambda}(x, 0)
     =
     \lambda^{3} \rho(\lambda x)
     .
     \llabel{EQ19}
     \end{equation}
Note that $L_3\times L_1$ is scaling invariant space for the initial data:
\begin{equation*}
\|u_{\lambda}(\cdot, 0)\|_{L^3} = \|\lambda u_0(\lambda\cdot)\|_{L^3}=\|u_0\|_{L^3},\quad \|\rho_{\lambda}(\cdot, 0)\|_{L^1} = \|\lambda^3 \rho_0(\lambda\cdot)\|_{L^3}=\|\rho_0\|_{L^1}.
\end{equation*}
Much effort has been made in search for the largest initial data space for the local and global well-posedness. For the Navier-Stokes equations, Koch and Tataru in \cite{KT} obtained the local well-posedness result with small $BMO^{-1}$ initial data. Such space is the largest scaling critical space for the local well-posedness of the NSE. In fact, \cite{BP} and \cite{Y} showed the local ill-posedness for initial spaces close to $BMO^{-1}$: $B^{-1}_{\infty,q}$, $q\in(2,\infty]$. It is worth mentioning that, for supercritical initial data spaces, we can still discuss some well-posedness results, only in the sense of ``almost surely''. See for example, \cite{NPS, WY1}.

Back to our Boussinesq system \eqref{EQ01}-\eqref{EQ03}. Let us first mention some results in the positive direction. As mentioned in \cite{BH}, by adapting Kato's method, the local well-posedness result can be proved for scaling invariant $L_3\times L_1$ initial data, given that the initial data is sufficiently small. The mild formulation for the Boussinesq system is given in \eqref{eqn-1204-2139}. One can also find some discussions of the uniqueness and long time behaviors in \cite{BH} and \cite{BM}. 

In this paper, we show a local ill-posedness result for the $3D$ Boussinesq system. Our initial data space is $u_0\in\dot{B}^{-1}_{\infty, \infty}$ which is the largest possible scaling invariant space for the mild formulation, and $\rho_0\in\dot{B}^{-1}_{\infty, \infty}$ which is scaling subcritical.Specifically, we show the existence of a smooth space-periodic solution with arbitrarily small initial data $(u_{0},\rho_{0})\in \dot{B}^{-1}_{\infty, \infty} \times \dot{B}^{-1}_{\infty, \infty}$ that becomes arbitrarily large in $\dot{B}^{-s}_{\infty, \infty}$ for all $s > 0$ immediately. This type of result is called the ``norm inflation''. This phenomenon was discovered by Bourgain and Pavlovi\'c in \cite{BP}, concerning the local ill-posedness for the 3D Navier-Stokes equations. The main idea of their approach is to construct initial data $u_{0}$ in $\dot{B}^{-1}_{\infty, \infty}$ so that when they evolve in time, one particular frequency will accumulate while others will dissipate. 

There have been many other analogous norm inflation results on different fluid models. In \cite{DQS}, Dai, Qing, and Schonbek proved norm inflation for the MHD equations. In \cite{CD1, CD2}, Cheskidov and Dai proved the norm inflation occurrence for the generalized Navier-Stokes and the generalized MHD equations. In this paper, we prove the ill-posedness result for the Boussinesq system by constructing initial data which leads to the norm inflation. To the best knowledge of the authors, this is the first result addressing the norm inflation for the Boussinesq equations. Note that the ill posedness results obtained in \cite{CL, GHK} are for a dispersive type system which is different from the equations considered in this paper.
The main result of this paper is the following.
	\begin{Theorem}
		\label{thm-1204-1740}
		For sufficiently small $\epsilon>0$, there exists a solution $(u(t),\rho(t))$ to the system \eqref{EQ01}--\eqref{EQ03} such that
		\begin{equation}
		\Vert u(0) \Vert_{\dot{B}^{-1}_{\infty, \infty}}
		+
		\Vert \rho(0) \Vert_{\dot{B}^{-1}_{\infty, \infty}}
		<
		\epsilon
		\llabel{EQ19}
		\end{equation}
		and
		\begin{equation}\label{est-1204-1822}
		\Vert \rho(T) \Vert_{\dot{B}^{-s}_{\infty, \infty}}
		>
		1/\epsilon,
		\end{equation}
		where $0<T<\epsilon$ and $s>0$.
	\end{Theorem}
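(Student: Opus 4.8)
The plan is to track a single resonant (``beating'') term in the Duhamel expansion of $\rho$ and to show that it alone can be made arbitrarily large while the data stays arbitrarily small. Writing $\mathbb{P}$ for the Leray projection, $e^{t\Delta}$ for the heat semigroup, and using $\nabla\cdot u=0$ to put $u\cdot\nabla\rho=\nabla\cdot(u\rho)$, I would recast \eqref{EQ01}--\eqref{EQ03} as the integral system
\begin{align}
u(t) &= e^{t\Delta}u_0 + B(u,u)(t) + L(\rho)(t), \nonumber \\
\rho(t) &= e^{t\Delta}\rho_0 + \mathcal{B}(u,\rho)(t), \nonumber
\end{align}
with $B(u,v)=-\int_0^t e^{(t-\tau)\Delta}\mathbb{P}\,\nabla\cdot(u\otimes v)\,d\tau$, $L(\rho)=\int_0^t e^{(t-\tau)\Delta}\mathbb{P}(\rho e_3)\,d\tau$, and $\mathcal{B}(u,\rho)=-\int_0^t e^{(t-\tau)\Delta}\nabla\cdot(u\rho)\,d\tau$. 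Since the data I construct will be a finite trigonometric polynomial (hence $C^\infty$), a unique smooth solution exists on a short interval, so the real task is purely quantitative: to estimate the terms $u_1=e^{t\Delta}u_0$, $\rho_1=e^{t\Delta}\rho_0$, $\rho_2:=\mathcal{B}(u_1,\rho_1),\dots$ and to show that $\rho_2(T)$ dominates in $\dot{B}^{-s}_{\infty,\infty}$.

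For the data I would fix a low frequency $\eta\in\mathbb{Z}^3$ with $|\eta|\sim 1$ (say $\eta=(1,0,0)$) and pick $N$ lattice vectors $a_1,\dots,a_N\in\mathbb{Z}^3$ at rapidly increasing (super-lacunary) scales $|a_s|\sim\kappa_s$, each $\kappa_s\geq T^{-1/2}$ and none parallel to $\eta$, and set
\[
u_0 = r\sum_{s=1}^N \kappa_s\,\xi_s\cos(a_s\cdot x), \qquad \rho_0 = r\sum_{s=1}^N \kappa_s\cos\big((a_s-\eta)\cdot x\big),
\]
with unit polarizations $\xi_s$ chosen so that $\xi_s\cdot a_s=0$ (hence $\nabla\cdot u_0=0$) and $\xi_s\cdot\eta>0$. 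Because the $N$ modes occupy $N$ distinct dyadic scales, each Littlewood--Paley block holds only $O(1)$ modes of size $r\kappa_s$, so $\|u_0\|_{\dot{B}^{-1}_{\infty,\infty}}+\|\rho_0\|_{\dot{B}^{-1}_{\infty,\infty}}\lesssim r$, \emph{uniformly in $N$}. This is the crucial asymmetry I intend to exploit: the input norm is a supremum over scales and stays $O(r)$, whereas the resonant outputs will add coherently into one frequency.

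For the main term, in $\nabla\cdot(u_1\rho_1)$ the pair $(a_s,\,a_s-\eta)$ produces, through $\cos(a_s\cdot x)\cos((a_s-\eta)\cdot x)$, a component at the fixed frequency $\eta$ of amplitude $\sim (r\kappa_s)^2(\xi_s\cdot\eta)\,e^{-(|a_s|^2+|a_s-\eta|^2)\tau}\approx (r\kappa_s)^2(\xi_s\cdot\eta)\,e^{-2\kappa_s^2\tau}$. Since $e^{(t-\tau)\Delta}$ acts on frequency $\eta$ essentially as the identity, the pair-$s$ contribution to the $\eta$-mode of $\rho_2(t)$ is $\sim (r\kappa_s)^2(\xi_s\cdot\eta)\int_0^t e^{-2\kappa_s^2\tau}\,d\tau\sim r^2(\xi_s\cdot\eta)$ as soon as $\kappa_s^2 t\gtrsim 1$, the factor $\kappa_s^{-2}$ from the time integral cancelling the $\kappa_s^2$ in the amplitude. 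As all $N$ contributions carry the same sign, the $\eta$-mode of $\rho_2(T)$ has size $\gtrsim Nr^2$, so
\[
\|\rho_2(T)\|_{\dot{B}^{-s}_{\infty,\infty}}\gtrsim |\eta|^{-s}\,Nr^2\sim Nr^2
\]
for every fixed $s>0$. Taking $N=r^{-3}$ makes this $\sim r^{-1}\to\infty$ while the data stays $\lesssim r\to 0$, and taking $\kappa_1$ large lets $T=\kappa_1^{-2}<\epsilon$ be as small as desired.

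It remains to show the remainder is negligible, which I expect to be the main obstacle. In a Kato-type space $X_T$ with norm $\sup_{0<t<T}\sqrt{t}\,\|\cdot\|_{L^\infty}$ one has the bilinear bounds $\|B(f,g)\|_{X_T},\|\mathcal{B}(f,g)\|_{X_T}\lesssim\|f\|_{X_T}\|g\|_{X_T}$ and $\|L(\rho)\|_{X_T}\lesssim\sqrt{T}\,\|\rho\|_{X_T}$, together with $\|u_1\|_{X_T}+\|\rho_1\|_{X_T}\lesssim r$; the Duhamel series then converges in $X_T$ and every term beyond $\rho_2$ carries an extra bilinear interaction, hence an extra factor of $r$. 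The delicate point is converting this $X_T$-smallness into control of the \emph{low-frequency} ($\dot{B}^{-s}_{\infty,\infty}$) content of $\rho-\rho_1-\rho_2$: I must rule out that higher iterates, or the back-reaction through $L(\rho)$ and the self-interaction $B(u,u)$ fed into $\mathcal{B}$, refocus coherently on $\eta$ at the level $Nr^2$. This is exactly a combinatorial question about how many higher-order frequency resonances can land near $\eta$, and it is where the super-lacunarity of the scales $\kappa_s$ is essential, since it forces such spurious resonances to be rare and keeps each offending term $O(Nr^3)$ or smaller at low frequency. Granting this,
\[
\|\rho(T)\|_{\dot{B}^{-s}_{\infty,\infty}}\geq \|\rho_2(T)\|_{\dot{B}^{-s}_{\infty,\infty}}-\|\rho(T)-\rho_2(T)\|_{\dot{B}^{-s}_{\infty,\infty}}\gtrsim Nr^2>1/\epsilon,
\]
which is \eqref{est-1204-1822}.
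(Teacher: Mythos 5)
Your construction and the resonance mechanism are essentially the paper's: lacunary pairs of frequencies offset by a fixed low mode $\eta$, a divergence-form product whose $\eta$-component survives the time integration with the factor $\kappa_s^{-2}$ cancelling $\kappa_s^2$, coherent summation over the $N$ scales, and the observation that the Besov norm of the data is a supremum over dyadic blocks and hence stays $O(r)$ uniformly in $N$. Your parametrization ($N$ modes, amplitude $r\to0$, inflation $\sim Nr^2$) is the paper's with $r_{\mathrm{paper}}=N$ and $r=N^{-\beta}$, and your lower bound on the $\eta$-mode of the second iterate matches the paper's Lemma on $\rho_{1,0}$.

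The gap is in the remainder, and it is not merely an omitted verification: the functional framework you propose for it cannot work. The bilinear estimate $\Vert B(f,g)\Vert_{X_T}\lesssim\Vert f\Vert_{X_T}\Vert g\Vert_{X_T}$ in the critical Kato space $X_T=\{\sup_{0<t<T}\sqrt{t}\,\Vert\cdot\Vert_{L^\infty}<\infty\}$ is false: the relevant integral
\begin{equation*}
\sqrt{t}\int_0^t (t-\tau)^{-1/2}\tau^{-1}\,d\tau
\end{equation*}
diverges, which is precisely why $\dot{B}^{-1}_{\infty,\infty}$ sits outside the Koch--Tataru theory and why this whole class of constructions is possible. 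So "the Duhamel series converges in $X_T$ and each higher term gains a factor of $r$" is not available. The paper's way around this is structural: it first computes the first Picard iterates $u_1,\rho_1$ \emph{explicitly}, using the lacunarity of the frequencies to upgrade the generic $t^{-1}$ integrand to $|\log t|\lesssim t^{-\delta}$ (the estimates for $F_1$, $\rho_{1,1}$, $\rho_{1,2}$), and then runs the fixed-point/absorbing argument for the remainder $(y,z)$ \emph{around the first two iterates} in the subcritical weighted space $\sup_s s^{\delta}\Vert\cdot\Vert_{L^\infty}$ with $\delta\ll1$, where the linear coefficients are $O(r^{-\beta})$ and the quadratic coefficient is $O(t^{1/2-\delta})$, so the estimates close (Proposition \ref{prop-1204-1823} and Lemma \ref{lem-1203-1819}). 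This also produces the actual constraint linking the amplitude, the number of modes, and the inflation time ($\beta>\max\{0,\tfrac12-\tfrac34\nu\}$ with $T=r^{-\nu}$), which your unsupported choice $N=r^{-3}$ bypasses. Finally, your worry about higher iterates "refocusing coherently on $\eta$" is a red herring: since $\Vert\cdot\Vert_{\dot{B}^{-s}_{\infty,\infty}}\lesssim\Vert\cdot\Vert_{L^\infty}$ on the torus, a crude sup-norm bound on the remainder of size $o(Nr^2)$ suffices, and no frequency-localized or combinatorial resonance counting is needed; the difficulty lives entirely in obtaining that sup-norm bound in a space where the iteration actually closes.
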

The organization of this paper goes as follows. In Section \ref{sec-pre}, we introduce our notation and the mild formulation. The rest of the paper is devoted to the proof of Theorem \ref{thm-1204-1740}. In Section \ref{sec-interaction}, we first construct the initial data as a combination of plane waves with very lacunary frequencies and then compute their interactions. The estimates are given in Section \ref{sec-1207-1609} and \ref{sec-1207-1609-2}.  In Section \ref{sec-prof-main}, we close the estimates by giving the proof of Theorem \ref{thm-1204-1740}.
	
	\section{Preliminaries and mild formulation}\label{sec-pre}
Recall the Leray projector
	\begin{equation}
	\mathbb P
	=
	I + \nabla (-\Delta)^{-1} \nabla \cdot
	\llabel{EQ071}
	\end{equation} 
as well as its symbol
	\begin{equation}
	\widehat{(\mathbb Pu)}_{j}(\xi)
	=
	\left(\delta_{jk} - \frac{\xi_j \xi_k}{|\xi|^2}\right)\hat{u}_{k}(\xi)
	\comma
	j=1,2,3.
	\llabel{EQ08}
	\end{equation} 
We start from the mild formulation of $(u,\rho)$ and rewrite the Boussinesq system \eqref{EQ01}--\eqref{EQ03} as in \cite{BH}:
     \begin{equation}\label{eqn-1204-2139}
     \begin{cases}
     u = e^{t\Delta}(u_0 + t\mathbb{P}\rho_0\vec{e}_3) + B_1(u,u) + B_2(u,\rho),\\
     \rho = e^{t\Delta}\rho_0 + B_3(u,\rho).
     \end{cases}
     \end{equation}
Here the bilinear forms are given by
\begin{equation}\label{eqn-1204-1601}
\begin{split}
	B_1(u,v)
	&=
	-\int_{0}^{t} e^{(t-s)\Delta} \mathbb P \nabla \cdot (u \otimes v)\, ds,\\
	B_2(u,\theta)
	&=
	-\int_{0}^{t} e^{(t-s)\Delta} (t-s) \mathbb P (\nabla \cdot (u \theta))\vec{e}_3\, ds,\\
B_3(u,v)
	&=
	-\int_{0}^{t} e^{(t-s)\Delta} \nabla \cdot (u \theta)\, ds.
\end{split}
\end{equation}
Such mild formation is equivalent to the original system. It can be obtained as follows. We first rewrite \eqref{EQ01} and \eqref{EQ02} in the integral form, by applying the Leray projection and the Duhamel principle. Then we replace the linear term in the equation of $u$ by the integral form equation of $\rho$ to make all the integral terms to be bilinear. For details, cf. \cite{BH}.
	Next, we define the Besov norms for $s>0$ as in \cite{L}:
	\begin{equation}
	\Vert f \Vert_{\dot{B}^{-s}_{\infty, \infty}}
	=
	\sup_{t>0} t^{\frac{s}{2}}\Vert e^{t\Delta}f \Vert_{L^{ \infty}}
	\llabel{EQ071}
	\end{equation} 
	\begin{equation}
	\Vert f \Vert_{{B}^{-s}_{\infty, \infty}}
	=
	\sup_{0<t<1} t^{\frac{s}{2}}\Vert e^{t\Delta}f \Vert_{L^{ \infty}}
	.
	\llabel{EQ071}
	\end{equation} 
	Note that they are equivalent on torus. Now we state the estimates for the bilinear operator. The following lemma plays a key role in estimates with Leray projectors.
		\cole
\begin{Lemma}\label{lem-1209-1242}
	There exists a constant $c$ such that
	\begin{equation*}
	\Vert \nabla e^{t\Delta}\mathbb Pf \Vert_{L^{\infty}}
	\leq
	Ct^{-1/2}
	\Vert f \Vert_{L^{\infty}}
	,
	\end{equation*}
	for $t>0$ and $f \in L^{\infty}$.
\end{Lemma}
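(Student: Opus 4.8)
The plan is to realize $\nabla e^{t\Delta}\mathbb{P}$ as a convolution operator and bound the $L^1$ norm of its kernel, so that the estimate follows at once from Young's inequality. Working first on $\mathbb{R}^3$, the operator is a Fourier multiplier whose (matrix-valued) symbol has entries
\[
m_t(\xi) = i\,\xi_l\, e^{-t|\xi|^2}\Big(\delta_{jk}-\frac{\xi_j\xi_k}{|\xi|^2}\Big),
\]
so that $\nabla e^{t\Delta}\mathbb{P}f = K_t * f$ with $K_t$ the inverse Fourier transform of $m_t$. Since $\Vert K_t * f\Vert_{L^\infty}\leq \Vert K_t\Vert_{L^1}\Vert f\Vert_{L^\infty}$, it suffices to prove $\Vert K_t\Vert_{L^1}\leq C t^{-1/2}$. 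Note already here that the factor $\xi_l$ removes the ambiguity of $\mathbb{P}$ at the origin, since the symbol vanishes there.

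First I would isolate the $t$-dependence by scaling. Because the Leray symbol $\delta_{jk}-\xi_j\xi_k/|\xi|^2$ is homogeneous of degree $0$, the substitution $\xi=\eta/\sqrt{t}$ yields $K_t(x)=t^{-2}K_1(x/\sqrt{t})$, and therefore
\[
\Vert K_t\Vert_{L^1(\mathbb{R}^3)} = t^{-2}\cdot t^{3/2}\,\Vert K_1\Vert_{L^1(\mathbb{R}^3)} = t^{-1/2}\,\Vert K_1\Vert_{L^1(\mathbb{R}^3)}.
\]
Thus the whole statement reduces to the single, $t$-independent claim that the Oseen-type kernel $K_1$ is integrable.

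The hard part will be exactly this integrability of $K_1$ at infinity. Near $x=0$ there is nothing to do: the prefactor $\xi_l$ cancels the singularity of the Leray symbol and the Gaussian controls large $\xi$, so $m_1$ is bounded and integrable, whence $K_1$ is bounded and continuous. The genuine point is the decay of $K_1$ as $|x|\to\infty$, which is governed by the regularity of $m_1$. Since $m_1$ is $C^\infty$ away from the origin and the only obstruction to smoothness is the degree-$0$ homogeneous factor $\xi_j\xi_k/|\xi|^2$, tempered by the vanishing factor $\xi_l$ so that $\xi_l(\delta_{jk}-\xi_j\xi_k/|\xi|^2)$ is homogeneous of degree $1$, the transform inherits the decay $|K_1(x)|\lesssim |x|^{-4}$; equivalently one obtains the Oseen bound $|K_t(x)|\lesssim (|x|+\sqrt{t})^{-4}$. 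Integrating this over $\mathbb{R}^3$, splitting into the regions $|x|<\sqrt t$ and $|x|>\sqrt t$, reproduces the bound $\lesssim t^{-1/2}$ directly; this is the only nontrivial computation.

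Finally I would transfer the estimate to the torus. On $\mathbb{T}^3$ the operator multiplies the $k$-th Fourier coefficient by $m_t(k)$, $k\in\mathbb{Z}^3$, and $m_t(0)=0$, so the zero mode poses no difficulty. By Poisson summation the periodic kernel is the periodization $K_t^{\mathrm{per}}=\sum_{n\in\mathbb{Z}^3}K_t(\cdot+2\pi n)$, so that $\Vert K_t^{\mathrm{per}}\Vert_{L^1(\mathbb{T}^3)}\leq \Vert K_t\Vert_{L^1(\mathbb{R}^3)}\leq C t^{-1/2}$ by the triangle inequality and tiling of $\mathbb{R}^3$ by translates of the fundamental domain. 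Young's inequality on $\mathbb{T}^3$ then gives the claimed bound.
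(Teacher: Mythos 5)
Your argument is correct and is essentially the standard one: the paper itself gives no proof of this lemma, deferring to Lemari\'e-Rieusset \cite{L}, where the same route is taken --- realize $\nabla e^{t\Delta}\mathbb P$ as convolution with the Oseen-type kernel, establish the pointwise bound $|K_t(x)|\lesssim (|x|+\sqrt{t})^{-4}$ (your scaling reduction to $K_1$ and the homogeneity-degree-one count giving decay $|x|^{-n-1}$ are the right mechanism), integrate to get $\Vert K_t\Vert_{L^1}\lesssim t^{-1/2}$, and periodize. The only step stated rather than proved is the $|x|^{-4}$ decay of $K_1$, but that is the classical Oseen estimate and your identification of why it holds is accurate, so no gap.
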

\cole
As shown in \cite{L}, the following bilinear estimate follows.
\begin{Lemma}\cite{L}\label{lem-1204-1626}
	For any $u, v \in L^{1}(0,T; L^{\infty})$, $B_1$ satisfies
	\begin{equation*}
	\Vert B_1(u,v) \Vert_{L^{\infty}}
	\leq
	C
	\int_{0}^{t} \frac{1}{(t-s)^{1/2}} \Vert u \Vert_{L^{\infty}}
	\Vert v \Vert_{L^{\infty}}\, ds
	.
	\llabel{EQ071}
	\end{equation*}
\end{Lemma}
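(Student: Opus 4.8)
The plan is to realize $T := \nabla e^{t\Delta}\mathbb P$ as a convolution operator and to bound the $L^1$ norm of its kernel, since $\Vert Tf\Vert_{L^\infty}\le \Vert K_t\Vert_{L^1}\Vert f\Vert_{L^\infty}$ whenever $Tf=K_t * f$. On $\mathbb T^3$ the operator acts on Fourier coefficients by the multiplier
\[
m_t(\xi)=i\xi_\ell\, e^{-t|\xi|^2}\Bigl(\delta_{jk}-\tfrac{\xi_j\xi_k}{|\xi|^2}\Bigr),\qquad \xi\in\mathbb Z^3,
\]
so its kernel is the periodization of the full-space kernel $K_t(x)=\int_{\mathbb R^3} m_t(\xi)\,e^{i\xi\cdot x}\,d\xi$. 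By Poisson summation $\Vert K_t\Vert_{L^1(\mathbb T^3)}\le \Vert K_t\Vert_{L^1(\mathbb R^3)}$ for every $t>0$, so it suffices to bound the $\mathbb R^3$ kernel; note the $\xi=0$ mode is annihilated by the factor $\xi_\ell$, so no zero-mode issue arises, and the large-$t$ torus regime needs no separate treatment.

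First I would record the scaling. Since the Leray symbol $\delta_{jk}-\xi_j\xi_k/|\xi|^2$ is homogeneous of degree $0$, the substitution $\xi\mapsto\eta/\sqrt t$ yields $K_t(x)=t^{-2}K_1(x/\sqrt t)$, hence $\Vert K_t\Vert_{L^1(\mathbb R^3)}=t^{-1/2}\Vert K_1\Vert_{L^1(\mathbb R^3)}$. This already produces the exact $t^{-1/2}$ scaling and reduces the whole lemma to the single claim $K_1\in L^1(\mathbb R^3)$, with $C=\Vert K_1\Vert_{L^1}$. To prove this I separate local and far-field behavior. The symbol $m_1(\xi)=i\xi_\ell e^{-|\xi|^2}(\delta_{jk}-\xi_j\xi_k/|\xi|^2)$ is bounded and integrable (bounded times a Gaussian), so $K_1$ is continuous and bounded and integrability near $x=0$ is automatic; the only obstruction is decay as $|x|\to\infty$, which is controlled by the lack of smoothness of $m_1$ at its sole singular point $\xi=0$ (away from it $m_1$ is smooth with Gaussian decay). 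Writing $m_1=i\xi_\ell e^{-|\xi|^2}\delta_{jk}-i\,\tfrac{\xi_\ell\xi_j\xi_k}{|\xi|^2}e^{-|\xi|^2}$, the first summand is Schwartz; in the second, the singular core $\xi_\ell\xi_j\xi_k/|\xi|^2=|\xi|\,\Omega(\xi/|\xi|)$ is homogeneous of degree $+1$ with $\Omega$ smooth on $S^2$. By the Fourier transform of homogeneous distributions (the transform of a degree-$a$ symbol smooth on the sphere is homogeneous of degree $-3-a$ and smooth away from the origin), after the smooth Gaussian cutoff this piece contributes a kernel that is smooth for $x\neq0$ and decays like $|x|^{-4}$, which is integrable at infinity in $\mathbb R^3$. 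Hence $K_1\in L^1(\mathbb R^3)$.

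The crux — and the point where the gradient is indispensable — is precisely this far-field decay. The Leray symbol alone is homogeneous of degree $0$ at the origin, whose transform decays only like $|x|^{-3}$ and is therefore borderline non-integrable, reflecting the familiar unboundedness of Riesz-type operators on $L^\infty$. The extra factor $\xi_\ell$ coming from $\nabla$ raises the homogeneity of the singular part to degree $+1$, upgrading the decay to the integrable $|x|^{-4}$; this is the mechanism that makes the estimate hold. To render the homogeneous-distribution step fully rigorous in the presence of the Gaussian I would introduce a smooth cutoff $\psi$ equal to $1$ near $\xi=0$ and supported in the unit ball, treat $m_1(1-\psi)$ as a genuinely Schwartz symbol, and apply the homogeneous-kernel identity only to the core $\tfrac{\xi_\ell\xi_j\xi_k}{|\xi|^2}\psi(\xi)$, absorbing the smooth remainder $\tfrac{\xi_\ell\xi_j\xi_k}{|\xi|^2}(e^{-|\xi|^2}\psi-\psi)$, which vanishes to higher order at the origin and thus yields faster decay. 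Combining with the scaling identity gives $\Vert K_t\Vert_{L^1(\mathbb T^3)}\le t^{-1/2}\Vert K_1\Vert_{L^1(\mathbb R^3)}$ for all $t>0$, which completes the proof with $C=\Vert K_1\Vert_{L^1(\mathbb R^3)}$.
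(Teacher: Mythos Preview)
Your argument is essentially correct, but it is really a proof of the \emph{preceding} lemma in the paper (Lemma~\ref{lem-1209-1242}, the pointwise bound $\Vert \nabla e^{t\Delta}\mathbb P f\Vert_{L^\infty}\le Ct^{-1/2}\Vert f\Vert_{L^\infty}$), not of the bilinear estimate itself. The paper does not prove Lemma~\ref{lem-1204-1626} at all: it states Lemma~\ref{lem-1209-1242} (citing \cite{L}) and then records Lemma~\ref{lem-1204-1626} as an immediate corollary, obtained by applying the operator bound inside the Duhamel integral with $f=(u\otimes v)(s)$ at time $t-s$:
\[
\Vert B_1(u,v)(t)\Vert_{L^\infty}\le \int_0^t \Vert \nabla e^{(t-s)\Delta}\mathbb P\,(u\otimes v)(s)\Vert_{L^\infty}\,ds
\le C\int_0^t (t-s)^{-1/2}\Vert u(s)\Vert_{L^\infty}\Vert v(s)\Vert_{L^\infty}\,ds.
\]
This one line is the only thing missing from your write-up; your final sentence claims the kernel bound ``completes the proof with $C=\Vert K_1\Vert_{L^1}$,'' but that completes Lemma~\ref{lem-1209-1242}, not the stated target.

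As for the content you did supply, it is a valid route to Lemma~\ref{lem-1209-1242}: the scaling reduction $\Vert K_t\Vert_{L^1}=t^{-1/2}\Vert K_1\Vert_{L^1}$ is correct, the Poisson-summation transfer to the torus is clean, and your identification of the mechanism---that the extra $\xi_\ell$ from the gradient raises the singular part of the Leray symbol from degree $0$ to degree $1$, upgrading the kernel decay from the non-integrable $|x|^{-3}$ to the integrable $|x|^{-4}$---is exactly the point. The cutoff decomposition you outline works; one just has to be careful that the pure homogeneous piece $\xi_\ell\xi_j\xi_k/|\xi|^2$ has a Fourier transform that is a genuine distribution near $x=0$ (degree $-4$ is not locally integrable in $\mathbb R^3$), so the $|x|^{-4}$ statement is about the far field only, which is all you need. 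The reference \cite{L} obtains the same bound via explicit pointwise estimates on the Oseen tensor $|\nabla O_{jk}(t,x)|\lesssim(|x|+\sqrt t)^{-4}$, which is the physical-space counterpart of your Fourier argument and leads to the same $L^1$ bound after integration.
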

In the same spirit, we can also bound $B_2$ and $B_3$ as
\begin{equation}\label{eqn-1208-2243}
\Vert B_2(u,\theta) \Vert_{L^{\infty}}
\leq
C
\int_{0}^{t} (t-s)^{1/2} \Vert u \Vert_{L^{\infty}}
\Vert \theta \Vert_{L^{\infty}}\, ds
\end{equation}
and
\begin{equation}\label{eqn-1209-0022}
\Vert B_3(u,\theta) \Vert_{L^{\infty}}
\leq
C
\int_{0}^{t} \frac{1}{(t-s)^{1/2}} \Vert u \Vert_{L^{\infty}}
\Vert \theta \Vert_{L^{\infty}}\, ds
.
\end{equation}
The proof for \eqref{eqn-1208-2243} is the same with Lemma \ref{lem-1204-1626}, by using Lemma~\ref{lem-1209-1242}. The proof for \eqref{eqn-1209-0022} is more straightforward since there is no Leray projector in $B_3$. For details, refer to \cite{L}.

Throughout this paper, we use notation $\lesssim$, $\gtrsim$, and $\approx$ when we have the inequality in the corresponding direction with constants independent of the parameters that we are interested in, e.g., $r$ and $t$.

	\section{Interaction of plane wave}\label{sec-interaction}
\subsection{The first approximation in Picard's iteration}
Similarly to \cite{BP}, we decompose the solution as
     \begin{equation}\label{eqn-1204-1555}
\begin{split}
     u
     &=	 g + u_1 + y,\\
     \rho
     &= \theta + \rho_1 + z,
\end{split}
\end{equation}
      where
      \begin{equation}
	  \begin{split}
g&:= e^{t\Delta}(u_0 + t\mathbb{P}\rho_0\vec{e}_3),\\
\theta&:= e^{t\Delta}\rho_0,\\
      u_1 &:= B_1(g, g) + B_2(g, \theta),\\
	  \rho_1 &:= B_3(g, \theta).\label{eqn-1202-0008}\\
	 \end{split}
	   \end{equation}
To facilitate our computation, we further decompose
\begin{equation}
y = y_0 + y_1 + y_2,\quad z = z_0 + z_1 + z_2,
\llabel{EQ19}
\end{equation}
where
\begin{equation}\label{eqn-1204-1626}
\begin{split}
y_0 &= 2B_1(g,u_1) + B_1(u_1,u_1) + B_2(g,\rho_1) + B_2(u_1,\theta) + B_2(u_1,\rho_1),\\
y_1 &= 2B_1(g,y) + 2B_1(u_1,y) + B_2(g,z) + B_2(y,\theta) + B_2(y,\rho_1) + B_2(u_1,z),\\
y_2 &= B_1(y,y) + B_2(y,z),\\
z_0 &= B_3(g,\rho_1) + B_3(u_1,\theta) + B_3(u_1,\rho_1),\\
z_1 &= B_3(g,z) + B_3(y,\theta) + B_3(y,\rho_1) + B_3(u_1,z),\\
z_2 &= B_3(y,z).
\end{split}
\end{equation}
	     \subsection{Choice of initial data and its diffusion}
	     We construct the initial data $(u_{0}, \rho_{0})$ as
	     \begin{equation}
	     u_{0}
	     =
	     r^{-\beta} \sum_{s=1}^{r}
	     |k_{s}| v_{s}\cos{(k_{s}\cdot x)},
	     \llabel{EQ19}
	     \end{equation}
	     \begin{equation}
	     \rho_{0}
	     =
	     r^{-\beta} \sum_{s=1}^{r}
	     |k'_{s}|\cos{(k'_{s}\cdot x)}
	     ,
	     \llabel{EQ19}
	     \end{equation}
where $r$ is a large enough number to be chosen at the end and $\beta >0$. The frequencies $k_i$ and $k_i'$ are chosen as follows. We first fix a large number $K>0$, and then for $i \in \mathbb N^{+}$ we define
\begin{equation}\label{eqn-1204-1824}
\overline{k_i'}=2^{i-1}K,\,\,k_i'=(0,0,\overline{k_i'}),\quad k_i=k_i'+\eta,
\end{equation}
where $\eta=(0,1,0)$. The vector $v_i$ is given as $$v_i=(0,\frac{1}{2},-1/(2\overline{k_i'})).$$
Clearly 
\begin{equation}\label{eqn-1206-2231}
v_i\cdot k_i=0,\,\, v_i\cdot k_i'=-1/2,\,\,\text{and}\,\, v_i\cdot k_j'=-\frac{|k_j'|}{2|k_i'|}\approx v_i\cdot k_j.
\end{equation}
From our construction, we have
\begin{equation*}
\dive(u_0)=0,\quad\text{and}\,\,\mathbb{P}(\rho_0 \vec{e_3})=0.
\end{equation*}
The following lemma contains useful properties which are used in our computation below. The proof is by direct computation, or one may check \cite{CD1}.
\begin{Lemma}\cite{CD1}
	\label{L04}
	For $\gamma >0$ and $k_{i}$ constructed as above, we have the following relations:
	\begin{equation}
	\sum_{j<i} |k_{j}|^\gamma
	\sim
	|k_i|^\gamma
	\llabel{EQ19}
	\end{equation}
	and
	\begin{equation}
	\sum_{i=1}^{r} |k_{i}|^{\gamma} e^{-|k_{i}|^{2}t}
	\les
	t^{\gamma/2}
	.
	\llabel{EQ19}
	\end{equation}
\end{Lemma}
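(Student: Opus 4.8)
The plan is to reduce both relations to elementary facts about lacunary (geometric) sequences. The first observation is that the shift by $\eta=(0,1,0)$ is harmless: since $|k_i'|=\overline{k_i'}=2^{i-1}K$ and $k_i=k_i'+\eta$, we have $|k_i|=\sqrt{(2^{i-1}K)^2+1}$, so $|k_i|\approx 2^{i-1}K$ with comparability constants independent of $i$ (assuming $K\ge 1$). Thus throughout I may replace $|k_i|$ by $N_i:=2^{i-1}K$ up to a bounded factor, reducing every sum to one over the geometric sequence $N_i^\gamma=(2^{i-1}K)^\gamma$, whose common ratio $2^\gamma$ exceeds $1$.

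For the first relation I would evaluate the partial sum directly as a finite geometric series,
\[
\sum_{j<i}|k_j|^\gamma \approx K^\gamma\sum_{j=1}^{i-1}2^{(j-1)\gamma}
= K^\gamma\,\frac{2^{(i-1)\gamma}-1}{2^\gamma-1}.
\]
Because the ratio $2^\gamma>1$, this sum is comparable to its largest term: it is at most $\frac{1}{2^\gamma-1}K^\gamma 2^{(i-1)\gamma}$ and at least the final summand $K^\gamma 2^{(i-2)\gamma}$. Since $K^\gamma 2^{(i-1)\gamma}\approx |k_i|^\gamma$, this gives $\sum_{j<i}|k_j|^\gamma\sim |k_i|^\gamma$ for $i\ge 2$, with constants depending only on $\gamma$.

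For the second relation (whose right-hand side should read $t^{-\gamma/2}$, the maximal value of $x\mapsto x^\gamma e^{-x^2 t}$ on $(0,\infty)$) I would split the sum at the critical frequency $N_*:=t^{-1/2}$, separating the indices with $N_i\le N_*$ from those with $N_i>N_*$. In the low-frequency regime I discard the exponential factor; since $\{N_i^\gamma\}$ is geometric with ratio $2^\gamma>1$, the argument above bounds that part by its largest term, which is $\lesssim N_*^\gamma=t^{-\gamma/2}$. In the high-frequency regime I parametrize $N_i=N_*2^{\ell}$ with $\ell\ge 0$, so the summand becomes $t^{-\gamma/2}\,2^{\ell\gamma}e^{-4^{\ell}}$; since $\sum_{\ell\ge 0}2^{\ell\gamma}e^{-4^{\ell}}$ converges (the super-exponential decay dominates the geometric growth), this contribution is also $\lesssim t^{-\gamma/2}$. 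Adding the two regimes yields the stated bound.

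The only genuinely delicate point is the high-frequency tail: one must verify that the interchange from a sum over $i$ to a sum over the dyadic index $\ell$ is legitimate, i.e.\ that each annulus $[N_*2^{\ell},N_*2^{\ell+1})$ contains only boundedly many frequencies. Here lacunarity makes this immediate—in fact there is at most one $N_i$ per annulus—so the comparison is justified and the convergence of $\sum_{\ell}2^{\ell\gamma}e^{-4^{\ell}}$ closes the estimate. Everything else is routine geometric-series bookkeeping, and I expect no substantive obstruction beyond tracking the $\gamma$-dependent constants.
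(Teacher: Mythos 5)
Your proof is correct and is exactly the ``direct computation'' the paper alludes to without writing out (it simply cites \cite{CD1}): reduce to the geometric sequence $2^{i-1}K$, sum the geometric series for the first relation, and split at the critical frequency $t^{-1/2}$ for the second. You are also right that the exponent in the second display is a typo and should read $t^{-\gamma/2}$; this is the form actually used later, e.g.\ in the proof of Lemma~\ref{lem-1204-1830}.
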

 Note that the diffusion of a plane wave $v\cos{(k\cdot x)}$ under the Laplacian $\Delta$ is given by
	     \begin{equation}
	     e^{t\Delta} v\cos{(k\cdot x)}
	     =
	     e^{-|k|^{2}t} v\cos{(k\cdot x)}
	     .
	     \llabel{EQ19}
	     \end{equation}
	     Therefore, we may write the diffusion of the initial data
	     \begin{align}
	     e^{t\Delta}u_{0}
	     =
	     r^{-\beta} \sum_{s=1}^{r}
	     |k_{s}| e^{-|k|^{2}t} v_{s}\cos{(k_{s}\cdot x)}
	     ,
	     \label{eqn-1204-1610-1}
	     \end{align}
	     \begin{align}
	     e^{t\Delta}\rho_{0}
	     =
	     r^{-\beta} \sum_{s=1}^{r}
	     |k'_{s}|e^{-|k|^{2}t} \cos{(k'_{s}\cdot x)}
	     .
	     \label{eqn-1204-1611-2}
	     \end{align}
To complete this section, we provide some estimates for the initial data and their diffusion.
\begin{Lemma}\label{lem-1204-1830}
For $\beta>0$, the inequalities
\begin{align}
&\|u_0\|_{\dot{B}^{-1}_{\infty, \infty}} \approx r^{-\beta},\quad \|\rho_0\|_{\dot{B}^{-1}_{\infty, \infty}} \approx r^{-\beta},\label{est-1203-1959}\\
&\|e^{t\Delta}u_0\|_{L^\infty}\les r^{-\beta}t^{-1/2},\quad\|e^{t\Delta}\rho_0\|_{L^\infty}\les r^{-\beta}t^{-1/2},\label{est-1204-1546}
\end{align}
hold for all $t\in (0,1]$.
\end{Lemma}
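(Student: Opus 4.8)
The plan is to prove the four estimates in the order that reflects their dependence: the two $L^\infty$ bounds \eqref{est-1204-1546} first, the upper bounds in \eqref{est-1203-1959} as an immediate corollary, and finally the matching lower bounds in \eqref{est-1203-1959}, which carry the only real content. For \eqref{est-1204-1546} I would start from the explicit diffusions \eqref{eqn-1204-1610-1} and \eqref{eqn-1204-1611-2}. Since $v_s=(0,\tfrac12,-1/(2\overline{k_s'}))$ obeys $|v_s|\le 1$ once $K\ge 1$, the triangle inequality gives $\|e^{t\Delta}u_0\|_{L^\infty}\le r^{-\beta}\sum_{s=1}^r|k_s|e^{-|k_s|^2t}|v_s|\les r^{-\beta}\sum_{s=1}^r|k_s|e^{-|k_s|^2t}$, and the second inequality of Lemma~\ref{L04} with $\gamma=1$ bounds the sum by $t^{-1/2}$. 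The bound for $\rho_0$ is identical, with $|k_s'|$ replacing $|k_s||v_s|$. This yields \eqref{est-1204-1546} for all $t>0$, in particular on $(0,1]$.

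The upper bounds in \eqref{est-1203-1959} then follow directly from the definition of the homogeneous Besov norm: $\|u_0\|_{\dot{B}^{-1}_{\infty, \infty}}=\sup_{t>0}t^{1/2}\|e^{t\Delta}u_0\|_{L^\infty}\les \sup_{t>0}t^{1/2}\cdot r^{-\beta}t^{-1/2}=r^{-\beta}$, and likewise for $\rho_0$. The remaining task, the lower bound $\|u_0\|_{\dot{B}^{-1}_{\infty, \infty}}\gtrsim r^{-\beta}$, is the genuinely delicate point. Here I would \emph{not} try to estimate the supremum over all $t$, but instead select a single favourable time and spatial point so as to extract the contribution of the lowest frequency alone.

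Concretely, I would take $t_\ast=|k_1|^{-2}$ and examine the second component of $e^{t_\ast\Delta}u_0$, namely $r^{-\beta}\sum_s|k_s|e^{-|k_s|^2t_\ast}\tfrac12\cos(k_s\cdot x)$, at a point $x_\ast$ with $\cos(k_1\cdot x_\ast)=1$. The $s=1$ term is then $\tfrac12 e^{-1}r^{-\beta}|k_1|$, while the lacunarity $|k_s|\approx 2^{s-1}K$ from \eqref{eqn-1204-1824} forces $|k_s|^2t_\ast\approx 4^{s-1}$, so the tail $s\ge 2$ is controlled in absolute value by $\tfrac12 r^{-\beta}|k_1|\sum_{s\ge 2}2^{s-1}e^{-4^{s-1}}$, a convergent series whose sum is a small absolute constant. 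Multiplying by $t_\ast^{1/2}=|k_1|^{-1}$ gives $t_\ast^{1/2}\|e^{t_\ast\Delta}u_0\|_{L^\infty}\ge(\tfrac12 e^{-1}-c_0)r^{-\beta}\gtrsim r^{-\beta}$, with $c_0=\tfrac12\sum_{s\ge 2}2^{s-1}e^{-4^{s-1}}$, since the principal term dominates. The identical computation, with $k_1'$ and $t_\ast=|k_1'|^{-2}$, handles $\rho_0$.

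The main obstacle is exactly this last comparison: one must check quantitatively that, after isolating the lowest mode, the geometric–exponential tail from the higher lacunary frequencies cannot cancel the principal contribution. The lacunarity of \eqref{eqn-1204-1824} is precisely what makes $\sum_{s\ge 2}2^{s-1}e^{-4^{s-1}}$ both summable and numerically small, so the comparison closes; every other step is a routine application of Lemma~\ref{L04} together with the triangle inequality.
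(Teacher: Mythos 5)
Your treatment of \eqref{est-1204-1546} is exactly the paper's argument: expand the diffusion as in \eqref{eqn-1204-1610-1}--\eqref{eqn-1204-1611-2}, pull out $t^{-1/2}$, and invoke the second inequality of Lemma~\ref{L04} with $\gamma=1$ (read, as you correctly do, with the exponent $-\gamma/2$). Where you genuinely diverge is on \eqref{est-1203-1959}: the paper dismisses it as ``clear from our construction'' and offers no argument, whereas you supply the full proof --- the upper bound as an immediate consequence of \eqref{est-1204-1546} and the caloric characterization of the norm, and the lower bound by evaluating at the single time $t_\ast=|k_1|^{-2}$ and isolating the lowest mode against the lacunary tail $\sum_{s\ge 2}2^{s-1}e^{-c\,4^{s-1}}$. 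That argument is sound; the only caveat is that the comparison of constants ($\tfrac12 e^{-1}$ versus the tail sum) is numerically somewhat tight if one only uses $|k_s|^2 t_\ast\gtrsim 4^{s-1}/2$, but since $K$ is eventually taken large the ratio $|k_s|^2/|k_1|^2$ is as close to $4^{s-1}$ as needed and the tail constant can be made as small as you like. A small simplification worth noting: at $x_\ast=0$ every cosine equals $1$ and the second component of each $v_s$ is $+\tfrac12$, so all terms in the sum are positive and the lower bound $t_\ast^{1/2}\Vert e^{t_\ast\Delta}u_0\Vert_{L^\infty}\ge \tfrac12 e^{-1}r^{-\beta}$ follows with no tail estimate at all; the same applies to $\rho_0$. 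In short, your proof is correct and strictly more complete than the one printed in the paper.
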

\begin{proof}
The estimate \eqref{est-1203-1959} is clear from our construction. In the following, we prove estimate \eqref{est-1204-1546} for the diffusion. By Lemma \ref{L04},
\begin{equation*}
\begin{split}
|e^{t\Delta}u_0| &= \big|r^{-\beta}\sum_{s=1}^r e^{-t|k_s|^2}|k_s| v_s \cos{(k_s\cdot x)}\big|\\
&\lesssim r^{-\beta}t^{-1/2} \sum_{s=1}^r e^{-t|k_s|^2}(|k_s|t^{1/2}) \lesssim r^{-\beta}t^{-1/2},
\end{split}
\end{equation*}
and similarly, 
\begin{equation*}
\begin{split}
|e^{t\Delta}\rho_0| &= |r^{-\beta}\sum_{s=1}^r e^{-t|k_s|^2}|k_s| v_s \cos{(k_s\cdot x)}|\\
&\lesssim r^{-\beta}t^{-1/2} \sum_{s=1}^r e^{-t|k_s|^2}|k_s|t^{1/2}\lesssim r^{-\beta}t^{-1/2}.
\end{split}
\end{equation*}
\end{proof}

\subsection{Estimate for $u_1$ and $\rho_1$}\label{sec-1207-1609}
In this section, we estimate the most important terms in \eqref{eqn-1204-1555}, $u_1$ and $\rho_1$ by computing the three bilinear terms directly. Recall the definitions of $u_1$ and $\rho_1$ in \eqref{eqn-1202-0008} and the bilinear forms in \eqref{eqn-1204-1601}.

\textbf{Estimate of $B_{1}(e^{t\Delta}u_{0}, e^{t\Delta}u_{0})$}. This term only involves the interactions between $\cos{(k_i\cdot x)}$ and $\cos{(k_j\cdot x)}$. From \eqref{eqn-1204-1610-1}, we further compute
	     \begin{align*}
	     e^{t\Delta}u_{0}\cdot\nabla e^{t\Delta}u_{0}
	     =-
	     r^{-2\beta} \sum_{i,j=1}^{r}
	     |k_{i}||k_{j}| (v_{i}\cdot k_{j})e^{-t(|k_i|^2+|k_j|^2)}v_{j}\cos{(k_{i}\cdot x)}\sin{(k_{j}\cdot x)}
	     &\\=
	     -\frac{r^{-2\beta}}{2} \sum_{i,j=1}^{r}
	     |k_{i}||k_{j}| (v_{i}\cdot k_{j})e^{-t(|k_i|^2+|k_j|^2)}v_{j}\sin{((k_{i}+k_{j})\cdot x)}
	     &\\-
	     \frac{r^{-2\beta}}{2} \sum_{i,j=1}^{r}
	     |k_{i}||k_{j}| (v_{i}\cdot k_{j})e^{-t(|k_i|^2+|k_j|^2)}v_{j}\sin{((k_{i}-k_{j})\cdot x)}
	     ,
	     \end{align*}
	     where we used the fact: $\cos{a}\sin{b}=\left((\sin(a+b)-\sin(a-b)) \right)/2$.
	     Furthermore, we have 
	     \begin{align*}
	     \mathbb P e^{t\Delta}u_{0}\cdot\nabla e^{t\Delta}u_{0}
	     =
	    &-\frac{r^{-2\beta}}{2} \sum_{i,j=1}^{r}
	    |k_{i}||k_{j}| (v_{i}\cdot k_{j})e^{-t(|k_i|^2+|k_j|^2)}w_{i,j}\sin{((k_{i}+k_{j})\cdot x)}
	    \\&-
	    \frac{r^{-2\beta}}{2} \sum_{i,j=1}^{r}
	    |k_{i}||k_{j}| (v_{i}\cdot k_{j})e^{-t(|k_i|^2+|k_j|^2)}\tilde{w}_{i,j}\sin{((k_{i}-k_{j})\cdot x)}
	    \\=&
	     E_1 + E_2
	     ,
	     \end{align*}
	     where $w_{i,j}$ is the projection of $v_{j}$ onto the orthogonal to $k_{i} + k_{j}$ and $\tilde{w}_{i,j}$ is the projection of $v_{j}$ onto the orthogonal to $k_{i} - k_{j}$.
	     Therefore, 
	     \begin{align}
	     B_{1}(e^{t\Delta}u_{0}, e^{t\Delta}u_{0})
	     =
	     \int_{0}^{t} e^{(t-s)\Delta}E_{1}\,ds
	     +
	     \int_{0}^{t} e^{(t-s)\Delta}E_{2}\,ds
	     =
	     F_1 + F_2
	     .
	     \llabel{EQ19}
	     \end{align}
	     For $F_1$, we have
	     \begin{align}
	     F_1
	     =
	     \frac{r^{-2\beta}}{2} \sum_{i,j=1}^{r}
	     |k_{i}||k_{j}| (v_{i}\cdot k_{j})e^{-t(|k_i|^2+|k_j|^2)}w_{i,j}\sin{((k_{i}+k_{j})\cdot x)}
	     \frac{1 - e^{-t(|k_i + k_j|^2-|k_i|^2-|k_j|^2)}}{|k_i + k_j|^2 - (|k_i|^2+|k_j|^2)}
	     .
	     \llabel{EQ19}
	     \end{align}
	     By the facts that $(1-e^{-x})/x$ and $xe^{-x}$ are bounded for $x>0$ and $|v_i\cdot k_j|\approx |k_j|/|k_i|$, we obtain
	     \begin{equation}\label{est-1206-2244}
\begin{split}
	     |F_1|
	     &\les
	     \frac{r^{-2\beta}}{2} \sum_{j=1}^{r}\sum_{i\leq j}^{}
	     te^{-t|k_i|^2}|k_{j}|^{2}e^{-t|k_j|^2} + \frac{r^{-2\beta}}{2} \sum_{i=1}^{r}\sum_{j<i}^{}
	     te^{-t|k_i|^2}|k_{j}|^{2}e^{-t|k_j|^2}\\
	     &\les
	     \frac{r^{-2\beta}}{2} \sum_{j=1}^{r}j
	     |k_j|^{2}e^{-t\frac{|k_j|^{2}}{4}} + \sum_{i=1}^{r}
	     te^{-t|k_i|^2}|k_{i}|^{2}\\
&\lesssim r^{-2\beta}|\log{t}|\lesssim r^{-2\beta}t^{-\delta}
	     .
\end{split}
	     \llabel{EQ19}
	     \end{equation}
From here on, we fix the constant $\delta\ll 1$. The estimates of $F_2$ follow similarly and we omit the details here. Hence, we have
\begin{equation}\label{est-1204-2222}
\|B_{1}(e^{t\Delta}u_{0}, e^{t\Delta}u_{0})\|_{L^\infty}\lesssim r^{-2\beta}r^{-\delta}.
\end{equation}

\textbf{The estimate of $B_{2}(e^{t\Delta}u_{0}, e^{t\Delta}\rho_{0})$ and $B_{3}(e^{t\Delta}u_{0}, e^{t\Delta}\rho_{0})$}. Both terms concern the interactions between $\cos{(k_i\cdot x)}$ and $\cos{(k_j'\cdot x)}$. Here we first compute for $B_3$, since it contains the most important term in which the accumulation occurs.

From \eqref{eqn-1204-1611-2}, we deduce
	     \begin{align*}
	     e^{t\Delta}u_{0}\cdot\nabla e^{t\Delta}\rho_{0}
	     =
	     &-
	     r^{-\beta-\beta} \sum_{i,j=1}^{r}
	     |k_{i}||k'_{j}| (v_{i}\cdot k'_{j})e^{-t(|k_i|^2+|k'_j|^2)}\cos{(k_{i}\cdot x)}\sin{(k'_{j}\cdot x)}
	     \\=& -
	     \frac{r^{-2\beta} }{2} \sum_{i,j=1}^{r}
	     |k_{i}||k'_{j}| (v_{i}\cdot k'_{j})e^{-t(|k_i|^2+|k'_j|^2)}\sin{((k_{i}-k'_{j})\cdot x)}
	     \\
	     &-\frac{r^{-2\beta} }{2} \sum_{i,j=1}^{r}
	     |k_{i}||k'_{j}| (v_{i}\cdot k'_{j})e^{-t(|k_i|^2+|k'_j|^2)}\sin{((k_{i}+k'_{j})\cdot x)}\\
=&-\frac{r^{-2\beta} }{2}\sum_{i=1}^{r}
	     |k_{i}||k'_{i}| (v_{i}\cdot k'_{i})e^{-t(|k_i|^2+|k'_i|^2)}\sin{(\eta\cdot x)}\\
&-\frac{r^{-2\beta} }{2}\sum_{i,j=1, i\neq j}^{r}
	     |k_{i}||k'_{j}| (v_{i}\cdot k'_{j})e^{-t(|k_i|^2+|k'_j|^2)}\sin{((k_{i}-k'_{j})\cdot x)}\\
&-\frac{r^{-2\beta} }{2}\sum_{i,j=1}^{r}
	     |k_{i}||k'_{j}| (v_{i}\cdot k'_{j})e^{-t(|k_i|^2+|k'_j|^2)}\sin{((k_{i}+k'_{j})\cdot x)}.
	     \end{align*}
Now, substituting back to \eqref{eqn-1202-0008}, computing the integration, and noting \eqref{eqn-1206-2231}, we rewrite $\rho_1$ as
\begin{equation*}
\begin{split}
\rho_1=\rho_{1,0} + \rho_{1,1} + \rho_{1,2},
\end{split}
\end{equation*}
where
\begin{equation*}
\begin{split}
\rho_{1,0}:=&\frac{r^{-2\beta} }{4}\sum_{i=1}^{r} e^{-t|\eta|^2}
	     |k_{i}|^2 \frac{1-e^{-t(|k_i|^2+|k'_i|^2)}}{|k_i|^2+|k'_i|^2-1}\sin{(\eta\cdot x)}\\
\rho_{1,1}:=-&\frac{r^{-2\beta}}{2}\sum_{i=1,i\neq j}^{r} e^{-t|k_{i}-k'_{j}|^2}
	     |k_{i}||k'_{j}|(v_{i}\cdot k'_{j}) \frac{1-e^{-t(|k_i|^2+|k'_j|^2)}}{|k_i|^2+|k'_j|^2-1}\sin{((k_{i}-k'_{j})\cdot x)}\\
\rho_{1,2}:=&-\frac{r^{-2\beta}}{2}\sum_{i,j=1}^{r} e^{-t|k_{i}+k'_{j}|^2}
	     |k_{i}||k'_{j}|(v_{i}\cdot k'_{j}) \frac{1-e^{-t(|k_i|^2+|k'_j|^2)}}{|k_i|^2+|k'_j|^2-1}\sin{((k_{i}+k'_{j})\cdot x)}.\\
\end{split}
\end{equation*}
We next bound $\rho_{1,0}$ from both below and above.
\begin{Lemma}\label{lem-1202-2023-1}
\begin{equation*}
\begin{split}
&\|\rho_{1,0}(\cdot,t)\|_{\dot{B}^{-s}_{\infty,\infty}}\gtrsim r^{1-2\beta} ,\quad \forall t\in [K^{-2},1],\\
&\|\rho_{1,0}(\cdot,t)\|_{L^\infty}\les r^{1-2\beta} ,\quad \forall t\in (0,1].
\end{split}
\end{equation*}
\end{Lemma}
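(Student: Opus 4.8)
The plan is to exploit the decisive structural feature of $\rho_{1,0}$: it is a \emph{single} plane wave in the fixed low frequency $\eta$, carrying a time-dependent scalar amplitude. Writing
$$\rho_{1,0}(x,t)=c(t)\sin(\eta\cdot x),\qquad c(t):=\frac{r^{-2\beta}}{4}\,e^{-t}\sum_{i=1}^{r}|k_i|^2\,\frac{1-e^{-t(|k_i|^2+|k_i'|^2)}}{|k_i|^2+|k_i'|^2-1},$$
(using $|\eta|^2=1$), both asserted bounds collapse to two-sided control of the single positive scalar $c(t)$, since the spatial profile $\sin(\eta\cdot x)$ is frozen. Note each summand is positive, so $c(t)>0$.

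To turn the norms into $|c(t)|$ I apply the heat-semigroup definitions directly (legitimate here because $\rho_{1,0}$ is literally a pure frequency). Since $e^{\tau\Delta}\sin(\eta\cdot x)=e^{-|\eta|^2\tau}\sin(\eta\cdot x)=e^{-\tau}\sin(\eta\cdot x)$, one gets $\|e^{\tau\Delta}\rho_{1,0}\|_{L^\infty}=c(t)e^{-\tau}$, hence
$$\|\rho_{1,0}(\cdot,t)\|_{\dot B^{-s}_{\infty,\infty}}=c(t)\sup_{\tau>0}\tau^{s/2}e^{-\tau}=C_s\,c(t),$$
with $C_s=(s/2)^{s/2}e^{-s/2}$ a finite positive constant depending only on $s$; while $\|\rho_{1,0}(\cdot,t)\|_{L^\infty}=c(t)$. (I write $\tau$ for the auxiliary heat time to avoid clashing with the solution time $t$.) Thus it suffices to show $c(t)\approx r^{1-2\beta}$ on $[K^{-2},1]$ and $c(t)\les r^{1-2\beta}$ on $(0,1]$.

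Next I estimate each summand using the explicit geometry from \eqref{eqn-1204-1824}. Since $\eta\perp k_i'$, we have exactly $|k_i'|^2=\overline{k_i'}^2$ and $|k_i|^2=\overline{k_i'}^2+1$, so the denominator is precisely $|k_i|^2+|k_i'|^2-1=2\overline{k_i'}^2$ and the ratio $|k_i|^2/(|k_i|^2+|k_i'|^2-1)\approx 1/2$. Hence the $i$-th summand is comparable to $1-e^{-t(|k_i|^2+|k_i'|^2)}$. For the upper bound I simply use $1-e^{-x}\le 1$ and $e^{-t}\le 1$, bounding each of the $r$ summands by $O(1)$, which yields $c(t)\les r^{-2\beta}\cdot r=r^{1-2\beta}$ uniformly on $(0,1]$.

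The only genuine point is the lower bound, where for $t\in[K^{-2},1]$ I must have $1-e^{-t(|k_i|^2+|k_i'|^2)}\gtrsim 1$ \emph{simultaneously for every} $i=1,\dots,r$. This is exactly where the restriction $t\ge K^{-2}$ is used: even the slowest (lowest-frequency) mode obeys $t\bigl(|k_i|^2+|k_i'|^2\bigr)\ge t\,|k_1'|^2\ge K^{-2}\cdot K^2=1$, so the exponential is bounded away from $1$ uniformly in $i$; combined with $e^{-t}\ge e^{-1}$, each summand is $\gtrsim 1$, and summing the $r$ of them gives $c(t)\gtrsim r^{-2\beta}\cdot r=r^{1-2\beta}$. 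I expect this uniform-in-$i$ activation estimate to be the main (and essentially only) obstacle: the window $[K^{-2},1]$ is tailored precisely so that the lowest mode has already ``switched on'' while the prefactor $e^{-t}$ has not over-dissipated anything; everything else is bookkeeping with the explicit values of $k_i$ and $k_i'$.
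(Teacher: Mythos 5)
Your proof is correct and follows essentially the same route as the paper: factor out the single low-frequency profile $\sin(\eta\cdot x)$, reduce both norms to the scalar amplitude, bound each summand by $O(1)$ from above, and use $t\ge K^{-2}$ so that $t(|k_i|^2+|k_i'|^2)\ge tK^2\ge 1$ makes every summand $\gtrsim 1$ from below. Your version is slightly more explicit than the paper's (computing $\|\sin(\eta\cdot x)\|_{\dot B^{-s}_{\infty,\infty}}=(s/2)^{s/2}e^{-s/2}$ and the exact denominator $2\overline{k_i'}^2$), but the substance is identical.
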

\begin{proof}
For $t\geq K^{-2}$, we have $-t(|k_i|^2 + |k_i'|^2)\lesssim -2^{i-1}$. Noting that $|\eta|=1$, we have
\begin{equation*}
\begin{split}
\|\rho_{1,0}(\cdot,t)\|_{\dot{B}^{-s}_{\infty,\infty}} &= \frac{r^{-2\beta} }{4}\Big|\sum_{i=1}^{r} e^{-t|\eta|^2}
	     |k_{i}|^2 \frac{1-e^{-t(|k_i|^2+|k'_i|^2)}}{|k_i|^2+|k'_i|^2-1}\Big|\|\sin{(\eta\cdot x)}\|_{\dot{B}^{-s}_{\infty,\infty}}\\
&\gtrsim r^{-2\beta} \sum_{i=1}^{r} e^{-t} (1-e^{-2^{i-1}}) \gtrsim r^{1-2\beta} .
\end{split}
\end{equation*}
Here we used
\begin{equation*}
|k_i|^2\approx |k_i|^2 + |k_i'|^2 -1\,\,\text{and}\,\,e^{-t}\approx 1,\,\,\text{for}\,\,t\in[0,1].
\end{equation*}
Similarly, if we simply bound $1-e^{-t(|k_i|^2+|k'_i|^2)}$ by $1$, then
\begin{equation*}
\|\rho_{1,0}(\cdot,t)\|_{L^\infty} \lesssim r^{-2\beta}  (\sum_{i=1}^r 1) \lesssim r^{1-2\beta} .
\end{equation*}
\end{proof}
For $\rho_{1,1}$ and $\rho_{1,2}$, we only estimate the upper bound.
\begin{Lemma}\label{lem-1202-2023-2}
\begin{equation*}
\|\rho_{1,1}(\cdot,t)\|_{L^\infty}\les r^{-2\beta}t^{-\delta} ,\quad\|\rho_{1,2}(\cdot,t)\|_{L^\infty}\les r^{-2\beta}t^{-\delta} ,\quad t\in (0,1].
\end{equation*}
\end{Lemma}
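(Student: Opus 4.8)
The plan is to reduce both estimates to bounding the absolute sum of the Fourier coefficients, since $\|\sin((k_i\mp k_j')\cdot x)\|_{L^\infty}\le 1$. First I would simplify a generic coefficient. Using \eqref{eqn-1206-2231} in the form $|v_i\cdot k_j'|\approx|k_j'|/|k_i'|$, the comparabilities $|k_i|\approx|k_i'|$ and $|k_i|^2+|k_j'|^2-1\approx|k_i|^2+|k_j|^2$, and the elementary bound $|1-e^{-t(|k_i|^2+|k_j'|^2)}|\le 1$, each coefficient is controlled by
$$
\frac{|k_j|^2}{|k_i|^2+|k_j|^2}\,e^{-t|k_i\mp k_j'|^2}\le e^{-t|k_i\mp k_j'|^2}
$$
up to an absolute constant, where the $-$ sign corresponds to $\rho_{1,1}$ and the $+$ sign to $\rho_{1,2}$.

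The crucial step is extracting dissipation from the factor $e^{-t|k_i\mp k_j'|^2}$, and this is where the lacunary construction \eqref{eqn-1204-1824} enters. Writing $k_i\mp k_j'=\eta+(k_i'\mp k_j')$ with $k_i'\mp k_j'$ carried by the third coordinate, for the $+$ sign one always has $|k_i+k_j'|\gtrsim 2^{\max(i,j)-1}K\approx\max(|k_i|,|k_j|)$, and for the $-$ sign with $i\ne j$ the same bound holds because $|2^{i-1}-2^{j-1}|\ge 2^{\max(i,j)-2}$; the contribution of $\eta$, of size $1$ and in a different coordinate, only increases the norm. Hence $|k_i\mp k_j'|^2\gtrsim\max(|k_i|,|k_j|)^2$ and $e^{-t|k_i\mp k_j'|^2}\le e^{-ct\max(|k_i|,|k_j|)^2}$ for some $c>0$. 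This is precisely the mechanism making the off-diagonal term $\rho_{1,1}$, and the entire term $\rho_{1,2}$, dissipate, in contrast with the resonant diagonal piece $\rho_{1,0}$ whose frequency $\eta$ has $|\eta|=1$.

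It then remains to sum $\sum_{i,j}e^{-ct\max(|k_i|,|k_j|)^2}$. Grouping the pairs by $m=\max(i,j)$, of which there are at most $2m$ for each $m$, gives $\sum_{m=1}^r 2m\,e^{-ct|k_m|^2}$. Since $m\approx\log_2(|k_m|/K)\les|k_m|^{2\delta}$, this is $\les\sum_{m=1}^r|k_m|^{2\delta}e^{-ct|k_m|^2}\les t^{-\delta}$ by Lemma~\ref{L04} (with $\gamma=2\delta$, the constant $c$ being absorbed after rescaling $t$). Restoring the prefactor $r^{-2\beta}$ yields both claimed bounds.

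The main obstacle is the second step: verifying that two distinct lacunary frequencies cannot nearly cancel, i.e. the separation estimate $|k_i-k_j'|\gtrsim\max(|k_i|,|k_j|)$ for $i\ne j$, so that the heat semigroup genuinely damps every off-diagonal interaction. Once this is in place, everything reduces to bookkeeping with Lemma~\ref{L04}, the logarithmic loss being absorbed into $t^{-\delta}$ exactly as in the estimate of $F_1$.
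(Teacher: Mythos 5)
Your proof is correct and follows essentially the same route as the paper's: both arguments rest on $|v_i\cdot k_j'|\approx|k_j'|/|k_i'|$, the off-diagonal separation $|k_i\mp k_j'|\gtrsim\max(|k_i|,|k_j'|)$ coming from the lacunary frequencies, the resulting heat decay $e^{-ct\max(|k_i|,|k_j|)^2}$, and the absorption of the logarithmic pair count into $t^{-\delta}$ via Lemma~\ref{L04}. The only differences are bookkeeping ones (you bound $1-e^{-x}$ by $1$ and cancel $|k_j'|^2$ against the denominator, while the paper uses $|1-e^{-x}|\lesssim|x|$ to extract a factor of $t$; you group by $\max(i,j)$ rather than splitting into $j<i$ and $i<j$), and these lead to the same estimate.
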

\begin{proof}
Here we only prove the inequality for $\rho_{1,1}$, since the computation for $\rho_{1,2}$ is similar. First,
\begin{equation*}
\begin{split}
|\rho_{1,1}| &\lesssim r^{-2\beta}  \sum_{i=1}^r \sum_{j<i} |k_i| |k_j'|(v_{i}\cdot k'_{j})e^{-t|k_i-k_j'|^2}\frac{1 - e^{-(|k_i|^2+|k_j'|^2 -|k_i-k_j'|^2)t}}{|k_i|^2 + |k_j'|^2 - |k_i - k_j'|^2} |\sin{(k_i-k_j')\cdot x}|\\
&\indent+ r^{-2\beta}  \sum_{j=1}^r \sum_{i<j} |k_i| |k_j'|(v_{i}\cdot k'_{j})e^{-t|k_i-k_j'|^2}\frac{1 - e^{-(|k_i|^2+|k_j'|^2 -|k_i-k_j'|^2)t}}{|k_i|^2 + |k_j'|^2 - |k_i - k_j'|^2} |\sin{(k_i-k_j')\cdot x}|.
\end{split}
\end{equation*}
Using the relation $|1-e^{-x}|\lesssim |x|$, $|\sin{\left((k_i-k_j')\cdot x\right)}|\leq 1$, and $|(v_{i}\cdot k'_{j})|\approx |k_j'|/|k_i'|$ we have
\begin{equation*}
\begin{split}
|\rho_{1,1}| \lesssim& r^{-2\beta} \sum_{i=1}^r |k_i|^{2}e^{-t|k_i-k_j'|^2}t + r^{-2\beta}\sum_{j=1}^r \sum_{i<j}  |k_j'|^{2}e^{-t|k_i-k_j'|^2}t.
\end{split}
\end{equation*}
Noting that $|k_i|\approx|k_i'|$, $|k_i-k_j'|\approx |k_i|$ for $j<i$, $|k_i-k_j'|\approx |k_j'|$ for $j>i$, and $\sum_{j<i}|k_j|^s \approx |k_i|^s$ for all $s>0$, we get
\begin{equation*}
|\rho_{1,1}| \lesssim r^{-2\beta} \sum_{i=1}^r|k_i|^2 e^{-t|k_i|^2}t + r^{-2\beta}\sum_{j=1}^rj|k_j'|^2 e^{-t|k_j'|^2}t \lesssim r^{-2\beta} t^{-\delta}.
\end{equation*}
Here the last inequality follows similar steps as in \eqref{est-1206-2244}. The lemma is proved.
\end{proof}
From Lemma \ref{lem-1202-2023-1} and \ref{lem-1202-2023-2}, we deduce
\begin{equation}\label{est-1206-2309-1}
|\rho_1| \leq |\rho_{1,0}| + |\rho_{1,1}| + |\rho_{1,2}|\lesssim r^{1-2\beta} 
	+ r^{-2\beta}t^{-\delta} ,\,\,\text{for}~t\in (0,1].
\end{equation}
From this, we obtain
\begin{equation}\label{est-1204-1621}
|B_2(e^{t\Delta}u_{0}, e^{t\Delta}\rho_{0})| \lesssim t|B_3(e^{t\Delta}u_{0}, e^{t\Delta}\rho_{0})| = t |\rho_1| \lesssim  r^{1-2\beta} t
+  r^{-2\beta}t^{1-\delta},\,\,\text{for}~t\in (0,1].
\end{equation}
One can simply check this by comparing $B_2$ and $B_3$ and noting that $|\mathbb{P}\sin(k\cdot x)\vec{e}_i|\lesssim |\sin(k\cdot x)|$.

Combining \eqref{est-1204-2222} and \eqref{est-1204-1621}, we obtain that for~$t\in (0,1]$,
\begin{align}\label{est-1206-2309-2}
\begin{split}
\|u_1(\cdot,t)\|_{L^\infty}
\lesssim |B_1(e^{t\Delta}u_{0}, e^{t\Delta}u_{0})| + |B_2(e^{t\Delta}u_{0}, e^{t\Delta}\rho_{0})|
&\lesssim 
r^{-2\beta} t^{-\delta} + r^{1-2\beta}t 
	+ r^{-2\beta}t^{1-\delta}
\\&\lesssim 
r^{-2\beta} t^{-\delta} + r^{1-2\beta}t^{1-\delta}.
\end{split}
\end{align}

\subsection{Analysis of $y$ and $z$}\label{sec-1207-1609-2}
In this section, we estimate $y$ and $z$. Throughout this section, we denote
\begin{equation*}
M(t):= \sup_{s\in(0,t)}\{s^\delta\|y(\cdot,s)\|_{L^\infty}\} + t\sup_{s\in(0,t)}\{s^\delta\|z(\cdot,s)\|_{L^\infty}\}.
\end{equation*}
The main result of this section is the following estimate.
\begin{Proposition}\label{prop-1204-1823}
For some $\nu>0$, let $T:=r^{-\nu}$. Then for sufficiently large $r$,
\begin{equation}\label{eqn-1203-1900}
M(t) \lesssim r^{-3\beta} + r^{1-3\beta}t^{1+\delta} + r^{2-4\beta}t^{5/2+\delta}, \quad \text{for}~t\in(0,T],
\end{equation}
provided that
\begin{equation}\label{eqn-1203-2005}
\beta>\max\{0, \frac{1}{2}-\frac{3}{4}\nu\}.
\end{equation}
In particular,
\begin{equation*}
\|z(\cdot,t)\|_{L^\infty} \lesssim r^{-3\beta}t^{-1-\delta} + r^{1-3\beta} + r^{2-4\beta}t^{3/2}
\end{equation*}
for any $t\in(0,T)$.
\end{Proposition}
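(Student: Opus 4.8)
The plan is to treat \eqref{eqn-1204-1626} as a fixed-point system for the pair $(y,z)$ and to run a continuity (bootstrap) argument on the quantity $M(t)$. In \eqref{eqn-1204-1626} the terms $y_0,z_0$ depend only on the already-estimated quantities $g,\theta,u_1,\rho_1$ and so play the role of a forcing, the terms $y_1,z_1$ are linear in $(y,z)$, and $y_2,z_2$ are quadratic in $(y,z)$. As building blocks I record: since $\mathbb P(\rho_0\vec e_3)=0$ we have $g=e^{t\Delta}u_0$, so by Lemma \ref{lem-1204-1830} $\|g\|_{L^\infty},\|\theta\|_{L^\infty}\lesssim r^{-\beta}t^{-1/2}$; the bound $\|u_1\|_{L^\infty}\lesssim r^{-2\beta}t^{-\delta}+r^{1-2\beta}t^{1-\delta}$ from \eqref{est-1206-2309-2}; and $\|\rho_1\|_{L^\infty}\lesssim r^{1-2\beta}+r^{-2\beta}t^{-\delta}$ from \eqref{est-1206-2309-1}. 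Finally, from the definition of $M$, for every $s\le t$ one has $\|y(s)\|_{L^\infty}\lesssim s^{-\delta}M(t)$ and $\|z(s)\|_{L^\infty}\lesssim s^{-\delta}t^{-1}M(t)$, which are the forms in which $M(t)$ will be fed back into the bilinear estimates.

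First I would estimate the forcing $y_0,z_0$ by applying the bilinear bounds of Lemma \ref{lem-1204-1626}, \eqref{eqn-1208-2243} and \eqref{eqn-1209-0022} to each summand, together with the building blocks above; every resulting time integral is a Beta-type integral $\int_0^t(t-s)^{\mp 1/2}s^{a}\,ds$, and logarithms or small powers are absorbed into factors $t^{\pm\delta}$ as elsewhere in the paper. Organizing the outcome by the number of ``large'' factors $r^{1-2\beta}$ (which originate from $\rho_{1,0}$ and the corresponding part of $u_1$) is the clean way to read off the three scales: summands with no large factor produce $r^{-3\beta}$, summands with one large factor (e.g.\ $B_2(g,\rho_1)$, $B_3(g,\rho_1)$) produce $r^{1-3\beta}$ times a near-linear power of $t$, and the two-large-factor terms $B_2(u_1,\rho_1)$, $B_3(u_1,\rho_1)$ produce $r^{2-4\beta}t^{5/2}$ after the weight. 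Thus the forcing contributes exactly the right-hand side $N(t):=r^{-3\beta}+r^{1-3\beta}t^{1+\delta}+r^{2-4\beta}t^{5/2+\delta}$ of \eqref{eqn-1203-1900}.

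Next I would bound the linear and quadratic parts. Feeding $\|y(s)\|\lesssim s^{-\delta}M(t)$ and $\|z(s)\|\lesssim s^{-\delta}t^{-1}M(t)$ into the same bilinear estimates, each summand of $y_1,z_1$ becomes (a small coefficient)$\times M(t)$: the pieces paired with $g$ or $\theta$ carry a coefficient $\lesssim r^{-\beta}$, whereas the pieces paired with the large part of $u_1$ or $\rho_1$ (e.g.\ $B_2(y,\rho_1)$, $B_3(y,\rho_1)$, $B_2(u_1,z)$, $B_3(u_1,z)$) carry a coefficient $\lesssim r^{1-2\beta}t^{3/2}$, which on $(0,T]$ with $T=r^{-\nu}$ equals $r^{1-2\beta-3\nu/2}$ up to $t^{\pm\delta}$. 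Hence the total linear coefficient is $\lesssim r^{-\beta}+r^{1-2\beta-3\nu/2}$, and this is $o(1)$ as $r\to\infty$ precisely under the hypothesis \eqref{eqn-1203-2005}, i.e.\ $\beta>\max\{0,\tfrac12-\tfrac34\nu\}$. The quadratic terms $y_2=B_1(y,y)+B_2(y,z)$ and $z_2=B_3(y,z)$ similarly yield a contribution $\lesssim t^{1/2-\delta}M(t)^2\le T^{1/2-\delta}M(t)^2$. The main obstacle is exactly the control of the many summands carrying the large factor $r^{1-2\beta}$ coming from $\rho_{1,0}$: individually these are not small, and it is only the combination of the smallness of $T$ (large $r$) and the balance encoded in \eqref{eqn-1203-2005} that renders their total absorbable into the left-hand side.

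Finally I would close the estimate. Collecting the three steps gives, for $t\in(0,T]$, an inequality of the form $M(t)\le C\,N(t)+\tfrac12 M(t)+C\,T^{1/2-\delta}M(t)^2$ once $r$ is large enough that the linear coefficient is below $\tfrac12$. Since $M$ is continuous with $M(0^+)\lesssim r^{-3\beta}$, a standard continuity argument applies: on the relatively open set where $M(t)$ is below the threshold making $C\,T^{1/2-\delta}M(t)\le\tfrac14$, the quadratic term is absorbed as well and one gets $M(t)\le 4C\,N(t)$, and checking that $4C\,N(t)$ stays below that threshold on $(0,T]$ (using $T=r^{-\nu}$ and large $r$) shows the bootstrap never breaks. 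This yields $M(t)\lesssim N(t)$, which is \eqref{eqn-1203-1900}. The ``in particular'' bound then follows immediately: from $t\sup_{s<t}s^\delta\|z(s)\|_{L^\infty}\le M(t)$ we get $\|z(t)\|_{L^\infty}\le t^{-1-\delta}M(t)\lesssim r^{-3\beta}t^{-1-\delta}+r^{1-3\beta}+r^{2-4\beta}t^{3/2}$.
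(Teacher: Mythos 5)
Your proposal is correct and follows essentially the same route as the paper: the same splitting of $y=y_0+y_1+y_2$, $z=z_0+z_1+z_2$ into forcing, linear, and quadratic parts, the same bilinear estimates fed with \eqref{est-1204-1546}, \eqref{est-1206-2309-1}, \eqref{est-1206-2309-2}, and the same absorption of the linear and quadratic contributions into $M(t)$ under the smallness condition at $t=T=r^{-\nu}$ guaranteed by \eqref{eqn-1203-2005} (the paper packages this as Lemma \ref{lem-1203-1819} with $C_1=N(t)$, $C_2=r^{-\beta}+r^{1-2\beta}t^{3/2}$, $C_3=t^{1/2-\delta}$). Your organization of the forcing terms by the number of $r^{1-2\beta}$ factors and your explicit continuity argument are only presentational variations on the paper's proof.
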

\begin{proof}
We first estimate $\|y(\cdot,t)\|_{L^\infty}$ and $\|z(\cdot,t)\|_{L^\infty}$. For this, we apply the bilinear estimates in Lemma~\ref{lem-1204-1626}, \eqref{eqn-1208-2243}, and \eqref{eqn-1209-0022} to each term in \eqref{eqn-1204-1626}, with the help of \eqref{est-1204-1546}, \eqref{est-1206-2309-1}, and \eqref{est-1206-2309-2}.

\textbf{Estimating $y$}. Recall $y=y_0+y_1+y_2$ and their expressions \eqref{eqn-1204-1626}. We first estimate $y_0$ which does not contain $y$ or $z$, term by term. Noting \eqref{est-1204-1546} and \eqref{est-1206-2309-2}, we have
	     \begin{equation}	     \label{eqn-1204-1235}
	     \begin{split}
	     \Vert  B_1 (g, u_1)\Vert_{L^{\infty}}
	    & \les
	     \int_{0}^{t} \frac{1}{(t-s)^{1/2}} 
	     \Vert  e^{t\Delta}u_{0}\Vert_{L^{\infty}}
	     \Vert  u_1\Vert_{L^{\infty}}
	     \,ds
	     \les
	     \int_{0}^{t} \frac{1}{(t-s)^{1/2}} 
	     r^{-\beta}s^{-1/2}
	     (r^{-2\beta}s^{-\delta} + r^{1-2\beta}s)
	     \,ds
	     \\&\les
	     r^{-3\beta}\int_{0}^{t} \frac{1}{(t-s)^{1/2}} 
	     s^{-1/2-\delta}
	     \,ds + r^{1-3\beta}\int_{0}^{t} \frac{1}{(t-s)^{1/2}} s^{1/2}\,ds
	     \\&\les
	     r^{-3\beta}t^{-\delta} + r^{1-3\beta}t
	     .
	     \end{split}
	     \end{equation}
Using \eqref{est-1206-2309-2}, we obtain
	     \begin{equation}	     \label{eqn-1204-1236}
	     \begin{split}
	     \Vert  B_1 (u_1, u_1)\Vert_{L^{\infty}}
	     & \les
	     \int_{0}^{t} \frac{1}{(t-s)^{1/2}} 
	     \Vert  u_1\Vert_{L^{\infty}}
	     \Vert  u_1\Vert_{L^{\infty}}
	     \,ds \les \int_{0}^{t} \frac{1}{(t-s)^{1/2}} \big((r^{-2\beta}s^{-\delta})^2 + (r^{1-2\beta}s)^2\big)\,ds\\
	     &\les
	     r^{-4\beta} t^{1/2 - 2\delta} + r^{2-4\beta}t^{5/2}
	     .
	     \end{split}
	     \end{equation}
	     Similarly, using \eqref{eqn-1208-2243}, \eqref{est-1204-1546}, \eqref{est-1206-2309-1}, and \eqref{est-1206-2309-2}, we obtain the $B_2$ terms estimates
	     \begin{equation}	     \label{eqn-1204-1237}
	     \begin{split}
	     \Vert  B_2 (g, \rho_1)\Vert_{L^{\infty}}
	     & \les
	     \int_{0}^{t} (t-s)^{1/2}
	     \Vert  e^{t\Delta}u_{0}\Vert_{L^{\infty}}
	     \Vert  \rho_1\Vert_{L^{\infty}}
	     \,ds \les  \int_{0}^{t} (t-s)^{1/2} r^{-\beta}s^{-1/2} (r^{-2\beta}s^{-\delta} + r^{1-2\beta})\,ds\\
&\les
r^{-3\beta}t^{1-\delta} + r^{1-3\beta}t,
	     \end{split}
	     \end{equation}
	     \begin{equation}	     \label{eqn-1204-1238}
	     \begin{split}
	     \Vert  B_2 (u_1, \theta)\Vert_{L^{\infty}}
	     & \les
	     \int_{0}^{t} (t-s)^{1/2}
	     \Vert  u_{1}\Vert_{L^{\infty}}
	     \Vert  e^{t\Delta}\rho_{0}\Vert_{L^{\infty}}
	     \,ds
	     \les
	     \int_{0}^{t} (t-s)^{1/2}
	     (r^{-2\beta}s^{-\delta} + r^{1-2\beta}s)
         r^{-\beta}s^{-1/2}
	     \,ds
	     \\&\les
	     r^{-3\beta}t^{1-\delta}+ r^{1-3\beta}t^{2}
	     ,
	     \end{split}
	     \end{equation}
and
	     \begin{equation}	     \label{eqn-1204-1239}
	     \begin{split}
	     \Vert  B_2 (u_1, \rho_1)\Vert_{L^{\infty}}
	     & \les
	     \int_{0}^{t} (t-s)^{1/2}
	     \Vert  u_{1}\Vert_{L^{\infty}}
	     \Vert  \rho_1\Vert_{L^{\infty}} \les 	\int_{0}^{t} (t-s)^{1/2} (r^{-2\beta}s^{-\delta} + r^{1-2\beta}s)(r^{1-2\beta} + r^{-2\beta}s^{-\delta})\,ds\\
	     &\les
	     r^{1-4\beta} t^{3/2 - \delta} 
	     	+
	     	r^{2-4\beta}t^{5/2}
	     	+
	     	r^{-4\beta}t^{3/2 - 2\delta}
           .
	     \end{split}
	     \end{equation}
Again, in the last inequality we use $t\in(0,1]$. Therefore, combining the above estimates \eqref{eqn-1204-1235}-- \eqref{eqn-1204-1239} yields
	     \begin{align}
	     \Vert  y_{0}\Vert_{L^{\infty}}
	      \les 
	      r^{-3\beta}t^{-\delta} + r^{1-3\beta}t
     	+
     	r^{2-4\beta}t^{5/2},\quad t\in (0,1].
	     \label{est-1204-1656}
	     \end{align}
	     Next, we estimate $y_{1}$ which contains linear combinations of $y$ and $z$. This is
	     \begin{align}
	     \begin{split}
	     \Vert  y_{1}\Vert_{L^{\infty}}
	     & \leq
	     \Vert  B_1(g,y)\Vert_{L^{\infty}}
	     +
	     \Vert  B_1(u_1,y)\Vert_{L^{\infty}}
	     +
	     \Vert  B_2(g,z)\Vert_{L^{\infty}}
	     +
	     \Vert  B_2(y,\theta)\Vert_{L^{\infty}}
	     \\&\indeq +
	     \Vert  B_2(y,\rho_1)\Vert_{L^{\infty}}
	     + 
	     \Vert  B_2(u_1,z)\Vert_{L^{\infty}}
	     .
	     \label{eqn-1204-1240}
	     \end{split}
	     \end{align}
	     Similarly to the estimates of \eqref{eqn-1204-1235} and \eqref{eqn-1204-1236}, we get
	     \begin{align}
	     \begin{split}
	     \Vert  B_1 (g, y)\Vert_{L^{\infty}}
	     & \les
	     \int_{0}^{t} \frac{1}{(t-s)^{1/2}} 
	     \Vert  e^{t\Delta}u_{0}\Vert_{L^{\infty}}
	     \Vert  y\Vert_{L^{\infty}}
	     \,ds
	     \les
	     r^{-\beta}\int_{0}^{t} \frac{1}{(t-s)^{1/2}} 
	     s^{-1/2-\delta}\,ds
	     \sup_{0<s<t}\{s^{\delta}\Vert  y\Vert_{L^{\infty}}\}
	     \\&\les
	     r^{-\beta}t^{-\delta}
	     \sup_{0<s<t}\{s^{\delta}\Vert  y\Vert_{L^{\infty}}\}
	     ,
	     \label{eqn-1204-1241}
	     \end{split}
	     \end{align}
and
	     \begin{align}
	     \begin{split}
	     \Vert  B_1 (u_1, y)\Vert_{L^{\infty}}
	     & \les
	     \int_{0}^{t} \frac{1}{(t-s)^{1/2}} 
	     \Vert  u_{1}\Vert_{L^{\infty}}
	     \Vert  y\Vert_{L^{\infty}}
	     \,ds
	     \les
	     \int_{0}^{t} \frac{1}{(t-s)^{1/2}} 
	      (r^{-2\beta}s^{-\delta} + r^{1-2\beta}s)s^{-\delta}\,ds
	     \sup_{0<s<t}\{s^\delta \Vert  y\Vert_{L^{\infty}}\}
	     \\&\les
	    (r^{-2\beta} t^{1/2-2\delta} + r^{1-2\beta}t^{3/2-\delta})
	     \sup_{0<s<t}\{s^\delta \Vert  y\Vert_{L^{\infty}}\}
	     .
	     \label{eqn-1204-1242}
	     \end{split}
	     \end{align}
Similarly to \eqref{eqn-1204-1237}-\eqref{eqn-1204-1239}, we obtain
	     \begin{align}
	     \begin{split}
	     \Vert  B_2 (g, z)\Vert_{L^{\infty}}
	     & \les
	     \int_{0}^{t} (t-s)^{1/2}
	     \Vert  e^{t\Delta}u_{0}\Vert_{L^{\infty}}
	     \Vert  z\Vert_{L^{\infty}}
	     \,ds
	     \les
	     r^{-\beta}\int_{0}^{t} (t-s)^{1/2}s^{-1/2-\delta}
	     \,ds
	     \sup_{0<s<t}\{s^\delta\Vert  z\Vert_{L^{\infty}}\}
	     \\&\les
	     r^{-\beta}t^{1-\delta}
	     \sup_{0<s<t}\{s^\delta\Vert z\Vert_{L^{\infty}}\}
	     ,
	     \label{eqn-1204-1243}
	     \end{split}
	     \end{align}
	     \begin{align}
	     \begin{split}
	     \Vert  B_2 (y, \theta)\Vert_{L^{\infty}}
	     & \les
	     \int_{0}^{t} (t-s)^{1/2}
	     \Vert  y\Vert_{L^{\infty}}\Vert  e^{t\Delta}\rho_{0}\Vert_{L^{\infty}}
	     \,ds
	     \les
	     r^{-\beta}\int_{0}^{t} (t-s)^{1/2}
	     s^{-1/2-\delta}\,ds
	     \sup_{0<s<t}\{s^\delta\Vert  y\Vert_{L^{\infty}}\}
	     \\&\les
	     r^{-\beta}t^{1-\delta}
	     \sup_{0<s<t}\{s^\delta\Vert  y\Vert_{L^{\infty}}\}
	     ,
	     \label{eqn-1204-1244}
	     \end{split}
	     \end{align}
	     \begin{align}
	     \begin{split}
	     \Vert  B_2 (y, \rho_1)\Vert_{L^{\infty}}
	     & \les
	     \int_{0}^{t} (t-s)^{1/2}
	     \Vert  \rho_1\Vert_{L^{\infty}}
	     \Vert  y\Vert_{L^{\infty}}
	     \,ds \les
\int_{0}^{t} (t-s)^{1/2}(r^{-2\beta}s^{-2\delta} + r^{1-2\beta} s^{-\delta})
	     \,ds
	     \sup_{0<s<t}\{s^\delta\Vert  y\Vert_{L^{\infty}}\}
	     \\&\les
	   (r^{-2\beta}t^{3/2-2\delta} + r^{1-2\beta}t^{3/2-\delta} )
	     \sup_{0<s<t}\{s^{\delta}\Vert  y\Vert_{L^{\infty}}\}
	     ,
	     \label{eqn-1204-1245}
	     \end{split}
	     \end{align}
and
	     \begin{align}
	     \begin{split}
	     \Vert  B_2 (u_1, z)\Vert_{L^{\infty}}
	     & \les
	     \int_{0}^{t} (t-s)^{1/2}
	     \Vert  u_{1}\Vert_{L^{\infty}}
	     \Vert  z\Vert_{L^{\infty}}
	     \,ds \les \int_{0}^{t} (t-s)^{1/2} (r^{-2\beta}s^{-2\delta} + r^{1-2\beta}s^{1-\delta})\,ds\sup_{0<s<t}\{s^\delta\Vert  z\Vert_{L^{\infty}}\}\\
	     &\les
	     (r^{-2\beta} t^{3/2-2\delta} + r^{1-2\beta}t^{5/2-\delta})\sup_{0<s<t}\{s^\delta\Vert  z\Vert_{L^{\infty}}\}
	     .
	     \label{eqn-1204-1246}
	     \end{split}
	     \end{align}
	     Therefore, combining the above estimates \eqref{eqn-1204-1240}-- \eqref{eqn-1204-1246} yields
\begin{equation}\label{est-1204-1701}
	     \Vert  y_{1}\Vert_{L^{\infty}} \les (r^{-\beta}t^{-\delta} + r^{1-2\beta}t^{3/2-\delta})
\sup_{0<s<t}\{s^{\delta}\Vert  y\Vert_{L^{\infty}}\} + (r^{-\beta}t^{-\delta} + r^{1-2\beta}t^{3/2-\delta})t\sup_{0<s<t}\{s^{\delta}\Vert  z\Vert_{L^{\infty}}\}.
\end{equation}
%
	     Next, we treat $y_2$, i.e., the quadratic combinations in terms of $y$ or $z$,
	     \begin{align}
	     \begin{split}
	     \Vert  y_{2}\Vert_{L^{\infty}}
	     &\les
	     \Vert  B_1(y,y)\Vert_{L^{\infty}}
	     +
	     \Vert  B_2(y,z)\Vert_{L^{\infty}}\\
	     &\les
	     \int_{0}^{t} \frac{1}{(t-s)^{1/2}} 
	     \Vert  y\Vert_{L^{\infty}}^2
	     \,ds
	     +
	      \int_{0}^{t} (t-s)^{1/2}
	     \Vert  y\Vert_{L^{\infty}}
	     \Vert  z\Vert_{L^{\infty}}
	     \,ds
	     \\&\les
	     t^{1/2-2\delta}(\sup_{0<s<t}\{s^\delta\Vert  y\Vert_{L^{\infty}}\})^{2}
	     +
	     t^{3/2-2\delta}\sup_{0<s<t}\{s^\delta\Vert  y\Vert_{L^{\infty}}\}
	     \sup_{0<s<t}\{s^\delta\Vert z\Vert_{L^{\infty}}\}
	     .
	     \label{est-1204-1659}
	     \end{split}
	     \end{align}
Combining \eqref{est-1204-1656},\eqref{est-1204-1701}, and \eqref{est-1204-1659}, for $t\in(0,1]$ we obtain
\begin{equation*}
\|y\|_{L^\infty}\lesssim r^{-3\beta}t^{-\delta} + r^{1-3\beta}t + r^{2-4\beta}t^{5/2} + (r^{-\beta}t^{-\delta} + r^{1-2\beta}t^{3/2-\delta})M(t) + t^{1/2-2\delta}M(t)^2.
\end{equation*}
Hence, for any $t\in(0,1]$,
\begin{equation}\label{est-1204-1357}
\sup_{0<s<t}\{s^{\delta}\|y\|_{L^\infty}\}\lesssim r^{-3\beta} + r^{1-3\beta}t^{1+\delta} + r^{2-4\beta}t^{5/2+\delta} + (r^{-\beta} + r^{1-2\beta}t^{3/2})M(t) + t^{1/2-\delta}M(t)^2.
\end{equation}
\textbf{Estimating $z$}. Again, recall that $z=z_0+z_1+z_2$ along with their expressions in \eqref{eqn-1204-1626}. For $z_0$, we have
	     \begin{align}
	     \begin{split}
	     \Vert  z_{0}\Vert_{L^{\infty}}
	     \leq
	     \Vert  B_3(g,\rho_1)\Vert_{L^{\infty}}
	     +
	     \Vert  B_3(u_1,\theta)\Vert_{L^{\infty}}
	     +
	     \Vert  B_3(u_1,\rho_1)\Vert_{L^{\infty}}
	     .
	     \llabel{EQ19}
	     \end{split}
	     \end{align}
Noting $|B_3(f,g)|\les t^{-1}|B_2(f,g)|$ and the estimates \eqref{eqn-1204-1237}, \eqref{eqn-1204-1238}, and \eqref{eqn-1204-1239}, we obtain
	     \begin{equation*}	    
	     \Vert  B_3 (g, \rho_1)\Vert_{L^{\infty}} \les t^{-1} \Vert  B_2 (g, \rho_1)\Vert_{L^{\infty}} \les
         r^{-2\beta-\beta t^{-\delta}} + r^{1-3\beta},
	     \end{equation*}
	     \begin{equation*}
	     \Vert  B_3 (u_{1}, \theta)\Vert_{L^{\infty}} \les t^{-1}\Vert  B_2 (u_{1}, \theta)\Vert_{L^{\infty}} \les r^{-3\beta}t^{-\delta}+ r^{1-3\beta}t,
	     \end{equation*}
and
	     \begin{equation*}
	     \Vert  B_3 (u_{1}, \rho_1)\Vert_{L^{\infty}} \les t^{-1}\Vert  B_2 (u_{1}, \rho_1)\Vert_{L^{\infty}} \les r^{1-4\beta} t^{1/2 - \delta} + r^{2-4\beta}t^{3/2} + r^{-4\beta}t^{1/2 - 2\delta}.
	     \end{equation*}
	     Therefore, 
	     \begin{equation}\label{est-1203-1605}
	     \Vert z_{0}\Vert_{L^{\infty}} \lesssim r^{-3\beta}t^{-\delta} + r^{1-3\beta} + r^{2-4\beta}t^{3/2}.
	     \end{equation}
	     Next we treat the linear term $z_1$:
	     \begin{equation*}
	     \Vert  z_{1}\Vert_{L^{\infty}}
	     \leq
	     \Vert  B_3(g, z)\Vert_{L^{\infty}}
	     +
	     \Vert  B_3(y,\theta)\Vert_{L^{\infty}}
	     +
	     \Vert  B_3(y,\rho_1)\Vert_{L^{\infty}}
	     +
	     \Vert  B_3(u_{1}, z)\Vert_{L^{\infty}}
	     .
	     \end{equation*}
Similarly to the estimates for $z_0$, noting \eqref{eqn-1204-1243}, \eqref{eqn-1204-1244}, \eqref{eqn-1204-1245}, and \eqref{eqn-1204-1246}, we obtain
	     \begin{equation*}
	     \Vert  B_3 (g, z)\Vert_{L^{\infty}} \les t^{-1}B_2 (g, z) \les r^{-\beta}t^{-\delta}
	     \sup_{0<s<t} \{s^\delta\Vert z\Vert_{L^{\infty}}\},
	     \end{equation*}
	     \begin{equation*}
	     \Vert  B_3 (y, \theta)\Vert_{L^{\infty}}\les t^{-1}\Vert  B_3 (y, \theta)\Vert_{L^{\infty}} \les r^{-\beta}t^{-\delta}
	     \sup_{0<s<t} \{s^\delta\Vert  y\Vert_{L^{\infty}}\},
	     \end{equation*}
	     \begin{equation*}
	     \Vert  B_3 (y, \rho_1)\Vert_{L^{\infty}} \les t^{-1}\Vert  B_2 (y, \rho_1)\Vert_{L^{\infty}} \les
	    (r^{-2\beta}t^{1/2 - 2\delta} + r^{1-2\beta}
	     t^{1/2-\delta}) \sup_{0<s<t} \{s^\delta\Vert  y\Vert_{L^{\infty}}\},
	     \end{equation*}
and
	     \begin{equation*}
	     \Vert  B_3 (u_{1}, z)\Vert_{L^{\infty}} \les t^{-1}\Vert  B_2 (u_{1}, z)\Vert_{L^{\infty}} \les (r^{-2\beta} t^{1/2-2\delta} + r^{1-2\beta}t^{3/2-\delta}) \sup_{0<s<t}\{s^\delta\Vert  z\Vert_{L^{\infty}}\}.
	     \end{equation*}
	     Therefore, by combining the above estimates, we have
	     \begin{equation}\label{est-1204-1348}
	     \Vert  z_{1}\Vert_{L^{\infty}} \les (r^{-\beta}t^{-\delta} + r^{1-2\beta}t^{1/2-\delta})
\sup_{0<s<t}\{s^{\delta}\Vert  y\Vert_{L^{\infty}}\} + (r^{-\beta}t^{-\delta} + r^{1-2\beta}t^{3/2-\delta})\sup_{0<s<t}\{s^{\delta}\Vert  z\Vert_{L^{\infty}}\}.
	     \end{equation}
	     Next we treat the quadratic term $z_2$ as
	     \begin{equation}\label{est-1203-1606-2}
	     \Vert  z_{2}\Vert_{L^{\infty}}
	     =
	     \Vert  B_3 (y, z)\Vert_{L^{\infty}}
	     \les
	      \int_{0}^{t} \frac{1}{(t-s)^{1/2}} 
	     \Vert  y\Vert_{L^{\infty}}
	     \Vert  z\Vert_{L^{\infty}}
	     \,ds
	     \les
	     t^{1/2-2\delta}
	    \sup_{0<s<t} \{s^\delta\Vert  y\Vert_{L^{\infty}}\}
	    \sup_{0<s<t} \{\Vert s^\delta z\Vert_{L^{\infty}}\}
	     .
	     \end{equation}
Summing \eqref{est-1203-1605}, \eqref{est-1204-1348}, and \eqref{est-1203-1606-2}, multiplying $t^\delta$, and taking $\sup$ in $t$, we obtain
\begin{equation}\label{est-1204-1356}
\begin{split}
\sup_{0<s<t}\{s^\delta\|z\|_{L^\infty}\}&\lesssim 	     r^{-3\beta} + r^{1-3\beta}t^\delta + r^{2-4\beta}t^{3/2+\delta}\\
&\indeq + (r^{-\beta} + r^{1-2\beta}t^{1/2})
\sup_{0<s<t}\{s^{\delta}\Vert  y\Vert_{L^{\infty}}\} + (r^{-\beta} + r^{1-2\beta}t^{3/2})\sup_{0<s<t}\{s^{\delta}\Vert  z\Vert_{L^{\infty}}\}\\
&\indeq + t^{1/2-\delta}
	    \sup_{0<s<t} \{s^\delta\Vert  y\Vert_{L^{\infty}}\}
	    \sup_{0<s<t} \{\Vert s^\delta z\Vert_{L^{\infty}}\}.
\end{split}
\end{equation}
Now, multiplying \eqref{est-1204-1356} by $t$, then adding to \eqref{est-1204-1357}, for any $t\in(0,1]$ we have
\begin{equation*}
M(t)\lesssim r^{-3\beta} + r^{1-3\beta}t^{1+\delta} + r^{2-4\beta}t^{5/2+\delta} + (r^{-\beta} + r^{1-2\beta}t^{3/2})M(t) + t^{1/2-\delta}M(t)^2.
\end{equation*}
The following lemma should be read as part of the proof.
\begin{Lemma}\label{lem-1203-1819}
Suppose for $0<t<T$, the non-negative continuous function $M(t)$ satisfies $M(0)=0$ and
\begin{equation*}
M(t)\lesssim C_1(t) + C_2(t) M(t) + C_3(t)M(t)^2,
\end{equation*}
where $C_{1}(t),C_{2}(t)$, and $C_{3}(t)$ are all non-negative. Then for any $t\in(0,T)$
\begin{equation*}
M(t) \lesssim C_1(t),
\end{equation*}
provided that $C_{1}(t),C_{2}(t)$, and $C_{3}(t)$ satisfy
\begin{equation}\label{eqn-1203-1821}
\sup_{t<T}\{C_{2}(t) + C_{1}(t)C_{3}(t)\} \ll 1.
\end{equation}
\end{Lemma}
The proof is by a standard absorbing argument which we will omit here. Next we return to the proof of the proposition. The estimate \eqref{eqn-1203-1900} can be obtained by applying Lemma \ref{lem-1203-1819} with
\begin{equation*}
C_1 = r^{-3\beta} + r^{1-3\beta}t^{1+\delta} + r^{2-4\beta}t^{5/2+\delta},\quad C_2 = r^{-\beta} + r^{1-2\beta}t^{3/2}, \quad\text{and}\,\, C_3=t^{1/2-\delta}.
\end{equation*}
Hence, we only need to check the condition \eqref{eqn-1203-1821}. We see that $C_1, C_2$, and $C_3$ are all increasing in $t$. Thus it suffices to check
\begin{equation*}
C_2(T) + C_1(T)C_3(T)\ll 1.
\end{equation*}
Substituting $T=r^{-\nu}$ in, we can rewrite it as
\begin{equation*}
r^{-\beta} + r^{1-2\beta-\frac{3}{2}\nu} + r^{-3\beta -(-\frac{1}{2}-\delta)\nu} + r^{1-3\beta-\frac{3}{2}\nu} + r^{2-4\beta-3\nu}\ll 1.
\end{equation*}
One can check that all the exponents are negative, given \eqref{eqn-1203-2005}. Hence the condition \eqref{eqn-1203-1821} holds if we choose $r$ sufficiently large. 
\end{proof}


\section{Closing the estimates}\label{sec-prof-main}
In this section, we close the estimates and prove our main theorem. 
\begin{proof}[Proof of Theorem \ref{thm-1204-1740}]
First, from \eqref{est-1203-1959}, we can choose $r$ sufficiently large to achieve
\begin{equation*}
\|u_0\|_{\dot{B}^{-1}_{\infty, \infty}}\leq \epsilon,\quad  \|\rho_0\|_{\dot{B}^{-1}_{\infty, \infty}}\leq \epsilon.
\end{equation*}
It remains to prove \eqref{est-1204-1822}. If $\beta>\max\{0, \frac{1}{2}-\frac{3}{4}\nu\}$ and $r\gg 1$, we have the estimate in Proposition \ref{prop-1204-1823}. As in Proposition \ref{prop-1204-1823}, we choose $T=r^{-\nu}$. Furthermore, in \eqref{eqn-1204-1824} we choose $K=r^{\nu/2}$ so that Lemma \ref{lem-1202-2023-1} holds for $t=r^{-\nu}$. Now, applying Lemma \ref{lem-1202-2023-1}, Lemma \ref{lem-1204-1830}, Lemma \ref{lem-1202-2023-2}, and Proposition \ref{prop-1204-1823} at the time $t=r^{-\nu}$, we obtain
	     \begin{equation}
	     \begin{split}
	     \Vert  \rho \Vert_{\dot{B}^{-s}_{\infty, \infty}}
	     &\geq
	     \Vert  \rho_{10} \Vert_{\dot{B}^{-s}_{\infty, \infty}}
	     -
	     \Vert  e^{t\Delta}\rho_{0} \Vert_{L^{\infty}}
	     -
	     \Vert  \rho_{11} \Vert_{L^{\infty}}
	     -
	     \Vert  \rho_{12} \Vert_{L^{\infty}}
	     -
	     \Vert  z \Vert_{L^{\infty}}
	     \nonumber\\
&\gtrsim
	     r^{1-2\beta} 
	     -
	     r^{-\beta}t^{-1/2}
	     -
	     r^{-2\beta}t^{-\delta}
	     - r^{-3\beta}t^{-1-\delta} - r^{1-3\beta} - r^{2-4\beta}t^{3/2}\nonumber\\
&\gtrsim
	     r^{1-2\beta} (1-r^{-1+\beta}t^{-1/2} - r^{-1}t^{-\delta} - r^{-1-\beta}t^{-1-\delta} - r^{-\beta} - r^{1-2\beta}t^{3/2})\\
&\gtrsim r^{1-2\beta} (1-r^{-1+\beta-\nu/2} - r^{-1+\nu\delta} - r^{-1-\beta+\nu+\nu\delta} - r^{-\beta} - r^{1-2\beta-(3/2)\nu} ).\label{est-1203-1929}
	     \end{split}
	     \end{equation}
In order to make 
\begin{equation*}
1-r^{-1+\beta-\nu/2} - r^{-1+\nu\delta} - r^{-1-\beta+\nu+\nu\delta} - r^{-\beta} - r^{1-2\beta-(3/2)\nu} \approx 1\quad \text{and}\,\,r^{1-2\beta}\gg 1
\end{equation*}
$r\gg 1$, we need to choose parameters satisfying
\begin{equation}\label{eqn-1204-1901}
\begin{split}
&-1+\beta-\nu/2<0,\,\, -1+\nu\delta <0,\,\, -1-\beta+\nu+\nu\delta<0,\,\, 1-2\beta-(3/2)\nu<0,\\
&\text{and}\,\,1-2\beta>0.
\end{split}
\end{equation}
Once this holds, we may choose sufficiently large $r$ so that $\Vert\rho \Vert_{\dot{B}^{-s}_{\infty, \infty}}>1/\epsilon$. Indeed, there exist parameters satisfying \eqref{eqn-1203-2005} and \eqref{eqn-1204-1901}. Recall that $\delta\ll 1$ is fixed. Here we can choose
\begin{equation*}
\beta=1/2-\nu/2,\quad\text{and}\,\, \nu\ll 1.
\end{equation*}
In particular, for such choice of parameters,
$$T=r^{-\nu}\approx \Vert  \rho \Vert_{\dot{B}^{-s}_{\infty, \infty}}^{-1}<\epsilon,$$
which completes the proof.
\end{proof}

	\section{Acknowledgments} 
	The authors would like to thank Hongjie Dong and Igor Kukavica for useful comments and suggestions. Z. Li was partially supported by the NSF under agreement DMS-1600593. W. Wang was supported in part by the NSF grant DMS-1907992.

\end{document}